\documentclass[11pt  
]{amsart}
\usepackage{
amssymb 
}


 \setlength{\marginparwidth}{0.6in}

\date{\today}

\setlength{\oddsidemargin}{0.0in}
\setlength{\evensidemargin}{0.0in}
\setlength{\textwidth}{6.5in}
\setlength{\topmargin}{0.0in}
\setlength{\textheight}{8.5in}

\newtheorem{theorem}{Theorem}
\newtheorem{proposition}{Proposition}
\newtheorem{lemma}{Lemma}
\newtheorem{definition}{Definition}

\theoremstyle{remark}
\newtheorem{remark}{Remark}

\DeclareMathOperator{\Vol}{Vol}

\DeclareMathOperator{\supp}{supp}
\DeclareMathOperator{\Ker}{Ker}

\DeclareMathOperator{\WF}{WF}
\DeclareMathOperator{\dist}{dist}

\newcommand{\eps}{\varepsilon}

\newcommand{\R}{{\bf R}}
\newcommand{\Id}{\mbox{Id}}
\renewcommand{\r}[1]{(\ref{#1})}
\newcommand{\PDO}{$\Psi$DO}
\newcommand{\be}[1]{\begin{equation}\label{#1}}
\newcommand{\ee}{\end{equation}}

\renewcommand{\d}{\mathrm{d}}

\renewcommand{\i}{\mathrm{i}}

\newcommand{\bo}{\partial \Omega}

\newcommand{\A}{\mathcal{A}}

\title[Multiwave tomography in a closed domain]{Multiwave tomography in a closed domain:\\ averaged sharp time reversal}

\author[Plamen Stefanov]{Plamen Stefanov}
\address{Department of Mathematics, Purdue University, West Lafayette, IN 47907}
\thanks{First author partly supported by a NSF  Grant DMS-1301646 }
\author[Yang Yang]{Yang Yang}
\address{Department of Mathematics, Purdue University, West Lafayette, IN 47907}

\usepackage{graphicx}
\graphicspath{%
    {converted_graphics/}
    {./}
}
\begin{document}
\begin{abstract}
We study the mathematical model of multiwave  tomography including thermo and photoacoustic tomography with a variable speed for a fixed time interval $[0,T]$. We assume that the waves reflect from the boundary of the  domain. We propose an averaged sharp time reversal algorithm. 
 In case of measurements on the whole boundary, we give an explicit solution in terms of a Neumann series expansion.  When the measurements are taken on a part of the boundary, we show that the same algorithm produces a parametrix. We present numerical reconstructions in both the full boundary and partial boundary data cases. 
\end{abstract}

\maketitle

\section{Introduction}  
The purpose of this work is to analyze the multiwave tomography mathematical model  when the acoustic waves reflect from the boundary and therefore the energy in the domain does not decrease. We model this with the energy preserving Neumann boundary conditions. This problem has been studied in the recent works  \cite{KunyanskiHC_2014,H_Kunyanski_14} motivated by the UCL  thermoacoustic group  experimental setup, see, e.g.,  \cite{CoxAB_07}. The papers \cite{KunyanskiHC_2014,H_Kunyanski_14} present numerical reconstructions and in \cite{H_Kunyanski_14}, the problem is analyzed  with the eigenfunction expansions method. That  approach requires a good control over the lower bound of the gaps between the Neuman eigenvalues and the Zaremba eigenvalues (or the Dirichlet ones in case of full boundary observations) which is not readily available, and cannot hold in certain geometries. It proposes a gradual asymptotic  time reversal as the observation time $T$ diverges to infinity,   which provides weak convergence under those conditions. On the other hand,  uniqueness and  stability for this problem are related to Unique Continuation and  Control Theory and  sufficient and necessary conditions for them follow from the Bardos-Lebeau-Rauch work \cite{BardosLR_control}. This was  noticed by Acosta and Montalto \cite{Acosta_M} who consider dissipative boundary conditions, including the case of Neumann ones we study. In the latter case, they propose a conjugate gradient numerical method; and if there is non-zero absorption, they show that a Neumann series approach similar to that in \cite{SU-thermo} can still be applied, even with partial data. 

Time reversal in its classical form  fails for this problem because the waves reflect from the boundary and there is no  good candidate for the Cauchy data at $t=T$, see section~\ref{sec_TR} below. In fact, doing time reversal at time $t=T$ with any choice of Cauchy data would produce a non-compact  error operator of norm at least one, as follows from our analysis; so it cannot be used even as a parametrix, see Figure~\ref{fig1}. The reason for the is the lack of absorption at the boundary either as absorbing boundary conditions or assuming a wave propagating to the whole space, as in the classical model; and this is the worst case for time reversal. A different problem arises when there is absorption in $\Omega$, see \cite{Andrew13}. 

In \cite{SU-thermo}, the first author and Uhlmann proposed a sharp time reversal method for the traditional thermo- and photo-acoustic model: when the acoustics waves do not interact with the boundary and propagate to the whole space, see also \cite{finchPR,Kruger03,Kruger99,KuchmentK_11,XuWang06}. 
The method 
 consists of choosing Cauchy data $(v,v_t)$ at $t=T$ that minimize the distance to the space of all Cauchy data $(f_1,f_2)$ with given trace on $\{T\}\times\bo$; and the latter is known from the data $\Lambda f$. This consists of choosing the Cauchy data $(\phi,0)$, where $\phi$ is the harmonic extension of $\Lambda f$ from $\{T\}\times\bo$ to $\{T\}\times\Omega$. Then we showed that the resulting error operator is a contraction, thus the problem can be solved by an exponentially and uniformly convergent Neumann series. Numerical simulations are presented in \cite{QSUZ_skull}. 

The main idea of this paper is to average the sharp time reversal done for times\footnote{we rename $T$ to $\tau$ below, and replace $T_0$ by $T$}  $T$ in an interval $[0,T_0]$ with $T_0>T(\Omega)/2$, where $T(\Omega)$ is greater than the length of the longest broken geodesic in $\bar\Omega$. 
The idea comes from the analysis of the error operators, see \r{I10} and Remark~\ref{rem3}. The latter propagates forward a wave with Neumann boundary conditions and sends back a wave reflecting according to the Dirichlet boundary conditions. It is well known that Neumann boundary conditions reflect the wave with no sign change, while the Dirichlet ones alter the sign, see section~\ref{sec_GO} for the microlocal equivalent of this phenomenon. While the error has norm one all the time, it has a sign depending on the time $T$. 
When we average, at $t=0$ we get  waves with the original signs and with the opposite ones, depending on the parity of the number of the reflections, see also Figure~\ref{fig:thermo}.  There is cancellation which makes the error operator a contraction, at least microlocally. The harmonic extension makes it an actual one. Those cancellations happen if and only if the stability condition implied by \cite{BardosLR_control} holds, and then we get an explicit reconstruction in the form of an exponentially convergent Neumann series, see Theorem~\ref{thm2.1}.  Also, instead of averaging multiple time reversals, we can average just one with an averaged boundary data $\phi(t)\Lambda f(t,x)$, see the first term in \r{h2} and also \r{T4}.  

The proposed algorithm can be applied to the partial data case as well. We time-reverse the Dirichlet data on the part $\Gamma$ of $\bo$ , where we have data; and imposed Neumann data on the rest. The Neumann series convergence then remains an open problem but we show that the method gives a parametrix away from a measure zero set when the stability condition is met. We present numerical reconstructions in both cases. 

\begin{figure}[h]  
  \centering
  \includegraphics[scale=0.15, keepaspectratio]{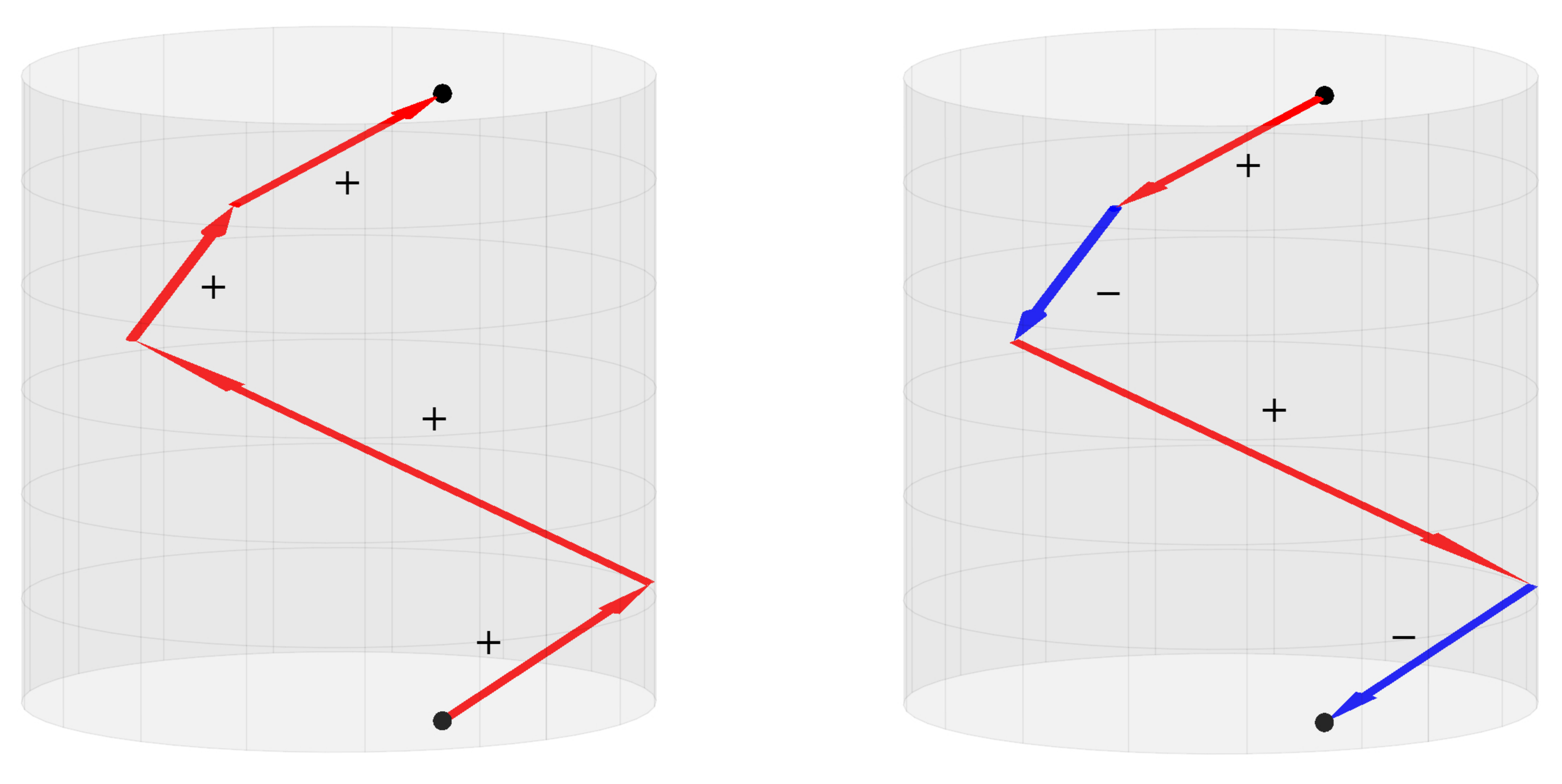}
  \caption{Propagation of singularities in $[0,T]\times\Omega$ for the positive speed only with Neumann boundary conditions (left) and time reversal with Dirichlet ones (right). In the latter case, the sign changes at each reflection.} \label{fig:thermo}
\end{figure}

For simplicity, we restrict ourselves to the case when the function we want to recover is supported in a fixed subdomain $\Omega_0\Subset\Omega$. Stability and uniqueness is unaffected by that, and already contained in \cite{BardosLR_control}. The micrlocal analysis justifying the time reversal however would be much more complicated without that assumption, and in applications, this condition is satisfied anyway. 

Our main results are the following. For full boundary measurements over time interval $[0,T]$ with a sharp $T$, we show in Theorem~\ref{thm2.1} that we can solve the problem by an exponentially convergent Neumann series. For partial data on $\Gamma\subset \bo$,  we show in Proposition~\ref{pr1} that if the stability (controllability) condition holds, our construction gives a parametrix away from the measure zero set of singularities which hit $\bo$ at the boundary of $\Gamma$ on $\bo$. Numerical reconstructions are presented on section~\ref{sec7} for both teh full and the partial data problems.

\textbf{Acknowledgments}. We would like to thank Carlos Montalto for his comments on a preliminary version of this paper and on providing us with a draft of  \cite{Acosta_M}. We would also like to thank Jie Chen and Xiangxiong Zhang for helpful discussions on numerical simulations.

\section{Preliminaries}

\subsection{The model}

Let $\Omega$ be a smooth bounded domain in $\R^n$. 
Let $g$ be a  Riemannian metric in $\bar\Omega$,   and let  $c>0$ be smooth. Let $P$ be the differential operator
\be{P}
P = -c^2\Delta_g,
\ee
where $\Delta_g$ is the Laplacian in the metric $g$. 
In applications, $g$ is Euclidean   but the speed $c$ is variable. For the methods we use, the metric $g$ poses no more difficulties than $P=-c^{-2}\Delta$. We could treat a more general second order symmetric operator involving a magnetic field and an electric one, as in \cite{SU-thermo} but for the simplicity of the exposition, we stay with $P$ as in \r{P}. The metric determining the geometry is $c^{-2}g$. 

Fix $T>0$. Let $u$ solve the problem
\begin{equation}   \label{1}
\left\{
\begin{array}{rcll}
(\partial_t^2 +P)u &=&0 &  \mbox{in $(0,T)\times \Omega$},\\
\partial_\nu u|_{(0,T)\times\bo}&=&0,\\
u|_{t=0} &=& f,\\ \quad \partial_t u|_{t=0}& =&0.
\end{array}
\right.               
\end{equation} 
Here $\partial_\nu = \nu^i \partial_{x^j}$, where $\nu$ is the unit, in the metric $g$, outer normal vector field on $\bo$.  The function  $f$ is the source which we eventually want to recover. The Neumann boundary conditions correspond to a ``hard reflecting'' boundary $\bo$. Let $\Gamma\subset\bo$ be a relatively open subset of $\bo$, where the measurements are made. The observation operator is then modeled by
\be{I1}
\Lambda f = u|_{[0,T]\times\Gamma}.
\ee
The methods we use allow us to treat  the case of Dirichlet boundary conditions in \r{1} and Neumann data  in \r{I1}.

\subsection{Function spaces}

The operator  $P$ is formally self-adjoint w.r.t.\ the measure $c^{-2}\d\Vol$, where $\d\Vol(x) = \sqrt{\det g}\, \d x$.  
Define the energy
\[
E(t,u) = \int_\Omega\left( |\nabla u|_g^2   +c^{-2}|u_t|^2 \right)\d\Vol,
\]
where   $|\nabla u|_g^2=g^{ij}(\partial_{x^i}u)(\partial_{x^j}u)$, and $\d\Vol(x) = (\det g)^{1/2}\d x$. This is just $(Pu,u)_{L^2}+\|u_t\|^2_{L^2}$ assuming that  $u$ satisfies boundary conditions allowing integration by parts without boundary terms.  Here and below, $L^2(\Omega) = L^2(\Omega; \; c^{-2}\d\Vol)$.

We define the Dirichlet space $H_{D}(\Omega)$ to be the completion of $C_0^\infty(\Omega)$ under the Dirichlet norm
\be{2.0H}
\|f\|_{H_{D}}^2= \int_\Omega |\nabla u|_g^2 \,\d\Vol.
\ee
Note that we actually integrate $|\nabla u|^2_{c^{-2}g}$ w.r.t.\ the volume measure of $c^{-2}\d\Vol$. 
By the trace theorem, the Dirichlet boundary condition $u=0$ on $\bo$ is preserved after the completion. 
It is easy to see that $H_{D}(\Omega)\subset H^1(\Omega)$, and that $H_{D}(\Omega)$ is topologically equivalent to $H_0^1(\Omega)$. Let $P_D$ be the Friedrichs extension of $P$ as self-adjoint unbounded operator $P_D$ on $L^2(\Omega)$ with domain $H_D\cap H^2$. For $f$ in the domain of $P_D$, we have  $\|f\|_{H_D(\Omega)}^2 = (P_Df,f)_{L^2}$. Note that the domain of the latter form is $H_D$, which a larger space than the domain of $P_D$.

To treat the Neumann boundary conditions, recall first that $P$, with Neumann boundary conditions, has a natural self-adjoint extension $P_N$ on $L^2$. First, one extends the energy form on $H^1(\Omega)$ (no boundary conditions) 
and then $P_N$ is the self-adjoint operator associated with that form, see \cite{Reed-Simon4}. The domain of $P_N$ is the closed subspace of $H^2(\Omega)$ consisting of functions $f$ with vanishing normal derivatives $\partial_\nu f$ on $\bo$.  
  In contrast to $P_D$, the operator  $P_N$ has a non-trivial null space consisting of the constant functions.  Such functions are stationary solutions of the wave equation and of no interest. Then we define $H_N(\Omega)$ as the   quotient space  $H^1(\Omega)/\Ker P_N$ equipped with the Dirichlet  norm. In other words, the functions in $H_N(\Omega)$ are defined up to a constant only. 
Note that on that space, $P_N$ is strictly positive. 
Both $P_D$ and $P_N$ are positive,  have compact resolvents, and hence point spectra only. They are both invertible. 

We can view $H_D(\Omega)$ as an equivalence class of functions constant on $\bo$; with two such functions equivalent if they differ by a constant (then they have the same norm). Then $H_D(\Omega)$ can be viewed as subspace of $H_N(\Omega)$.


The energy norm for the Cauchy data $(f_1,f_2)$, that we denote by $\|\cdot\|_{\mathcal{H}}$ is then defined by
\[
\|(f_1,f_2)\|^2_{\mathcal{H}} = \int_\Omega \left( |\nabla f_1|_g^2  +c^{-2}|f_2|^2 \right)\d\Vol.
\]
We define two  energy spaces 
\[
\mathcal{H}_D(\Omega) = H_D(\Omega)\oplus L^2(\Omega), \quad \mathcal{H}_N(\Omega) = H_N(\Omega)\oplus L^2(\Omega)
\] 
both equipped with the  energy norm defined above. We define the energy space $\mathcal{H}(\R^n)$ in $\R^n$ in a similar way; and we will use it only in our microlocal construction, with compactly supported functions. We denote pairs of functions below by boldface, like $\mathbf{f}=(f_1,f_2)$. Operators with range in vector valued functions will be denoted by boldface symbols, as well.

The wave equation then can be written down as the system
\be{s1}
\mathbf{u}_t= \mathbf{P}\mathbf{u}, \quad \mathbf{P} = \begin{pmatrix} 0&I\\P&0 \end{pmatrix},
\ee
where $\mathbf{u}=(u,u_t)$ belongs to the energy space $\mathcal{H}_D$ or $\mathcal{H}_N$. Choosing $P$ to be either   $P_D$ or $P_N$, we get  a skew-selfadjoint operator  $\mathbf{P}_D$, respectively $\mathbf{P}_N$ on $\mathcal{H}_D$, respectively $\mathcal{H}_N$, see \cite{Goldstein2003}. Those two operators generate unitary groups $\mathbf{U}_D(t) = \exp(t\mathbf{P}_D)$ and  $\mathbf{U}_N(t)= \exp(t\mathbf{P}_N)$, respectively.  
 
 Let $\mathcal{P}$ be the Poisson operator $\mathcal{P}:h\mapsto\phi$ defined as the solution of 
 \be{C1}
P \phi=0\quad \text{in $\Omega$}, \quad \phi|_{\bo} =h.
\ee
Of course, $Pu=0$ is equivalent to $\Delta_g u=0$. 
For $f\in H^1(\Omega) $, set 
\be{Pi}
\Pi f:= f-\mathcal{P}(f|_{\bo}).
\ee
Then $\Pi f$ vanishes on $\bo$. 
 By the trace theorem and standard energy estimates, $\Pi :H^1(\Omega)\to H_D(\Omega)$ is bounded. Also,  $\Pi = P_D^{-1}P$. One can think of $\Pi$ as an orthogonal  projection operator from $H_N(\Omega)$ to $H_D(\Omega)$ if we think of $H_D$ as an equivalence class as well, modulo constants as explained above. In any case, $\Pi$ is invariantly defined on $H_N(\Omega)$ as it is easy to see and we have the following. 

 \begin{lemma}\label{lemma_Pi} \
 
 (a) The operator $\Pi: H_N(\Omega)\to H_D(\Omega)$  has norm  $1$. 
 
(b) The operator $\mathbf{\Pi}: (f_1,f_2) \mapsto (\Pi f_1,f_2)$ from $\mathcal{H}_N(\Omega)$ to $\mathcal{H}_D(\Omega)$ has norm $1$. 
\end{lemma}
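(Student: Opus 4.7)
The plan is to identify $\Pi$ with the orthogonal projection from $H^1(\Omega)$ onto its closed subspace $H_D(\Omega)$, taken with respect to the Dirichlet inner product $\langle u,v\rangle=\int_\Omega g^{ij}\partial_i u\,\partial_j v\,\d\Vol$ that underlies both the $H_D$ and the $H_N$ norms. Once this identification is made, both norm bounds come for free: an orthogonal projection has operator norm at most $1$, and the value $1$ is witnessed by any nonzero $u\in C_0^\infty(\Omega)$, for which $u|_{\bo}=0$ and therefore $\Pi u=u$.

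Before the main argument I would check that $\Pi$ descends to the quotient $H_N(\Omega)=H^1(\Omega)/\Ker P_N$. Since $\Ker P_N$ consists of constants and $\mathcal{P}$ maps constants to themselves, $\Pi$ annihilates constants and so is well defined on $H_N(\Omega)$; moreover $\Pi f$ does lie in $H_D(\Omega)$, because by construction its trace on $\bo$ vanishes and its $H^1$ norm is controlled by the trace theorem plus standard elliptic estimates for $\mathcal{P}$.

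The computational core will be the Pythagorean identity. Writing $f=\Pi f+\mathcal{P}(f|_{\bo})$, I would prove that the two summands are orthogonal in the Dirichlet inner product. For $u_0\in C_0^\infty(\Omega)$ and $h$ with $\Delta_g h=0$, Green's identity gives
\[
\int_\Omega g^{ij}\partial_i u_0\,\partial_j h\,\d\Vol
=-\int_\Omega u_0\,\Delta_g h\,\d\Vol+\int_{\bo} u_0(\partial_\nu h)\,\d S=0,
\]
and density extends this to all $u_0\in H_D(\Omega)$. Applying this with $u_0=\Pi f$ and $h=\mathcal{P}(f|_{\bo})$ yields
\[
\|f\|_{H_N}^2=\|\Pi f\|_{H_D}^2+\|\mathcal{P}(f|_{\bo})\|_{H_N}^2,
\]
and in particular $\|\Pi f\|_{H_D}\le\|f\|_{H_N}$, with equality for any $f\in C_0^\infty(\Omega)\setminus\{0\}$. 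This settles (a).

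Part (b) requires no new ingredient: since $\mathbf{\Pi}$ is the identity in the second slot,
\[
\|\mathbf{\Pi}(f_1,f_2)\|_{\mathcal{H}_D}^2=\|\Pi f_1\|_{H_D}^2+\|f_2\|_{L^2}^2\le\|f_1\|_{H_N}^2+\|f_2\|_{L^2}^2=\|(f_1,f_2)\|_{\mathcal{H}_N}^2,
\]
and sharpness follows by choosing $(f_1,0)$ with $f_1\in C_0^\infty(\Omega)\setminus\{0\}$. The only place where care is needed is the quotient bookkeeping around $H_N$ and the verification that the orthogonal decomposition is compatible with the completions defining $H_D$ and $H_N$, but both points are routine once one notes that $\mathcal{P}$ factors through the trace and preserves constants.
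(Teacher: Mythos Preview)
Your argument is correct and follows essentially the same route as the paper: decompose $f=\Pi f+\mathcal{P}(f|_{\bo})$, observe that the two pieces are orthogonal in the Dirichlet inner product, read off $\|\Pi f\|\le\|f\|$ from the resulting Pythagorean identity, and get sharpness from any nonzero $f$ with $f|_{\bo}=0$. You simply spell out a bit more than the paper does (the Green's identity behind the orthogonality and the quotient compatibility), but the skeleton is identical.
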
 

\begin{proof} For $f\in H_N(\Omega)$, we have 
\[
f = \Pi f + \phi, \quad \phi =  \mathcal{P}(f|_{\bo}).
\]
This is an orthogonal decomposition w.r.t.\ the $H_D$ norm (which is only a seminorm on the second term). Therefore,
\[
\|f\|_{H_D}^2 = \|\Pi f\|_{H_D}^2+ \|\phi\|_{H_D}^2.
\]
This shows that the norm of $\Pi$ does not exceed $1$. Since we can take $f\not=0$ vanishing on $\bo$, the norm is actually $1$. 

The proof of (b) follows immediately from (a). 
\end{proof}

Similarly, let $\Omega_0\subset \Omega$ be a  subdomain with a smooth boundary. 
Identify $H_D(\Omega_0)$ with the subspace of $H_D(\Omega)$ of functions supported in $\bar\Omega_0$. Set 
$\Pi_0 f=h$ to be the solution of
\be{Ih}
Ph=Pf\quad\text{in $\Omega_0$}, \quad h|_{\bo_0}= 0.
\ee

 \begin{lemma}\label{lemma_Pi1} \
 $\Pi_0$ is the orthogonal projection from $H_D(\Omega)$ to $H_D(\Omega_0)$. 
 \end{lemma}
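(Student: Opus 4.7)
The plan is to verify the three defining properties of an orthogonal projection on $H_D(\Omega)$: that $\Pi_0 f \in H_D(\Omega_0)$, that $\Pi_0$ is idempotent, and that $f-\Pi_0 f$ is orthogonal to $H_D(\Omega_0)$. The workhorse identity throughout is that for $u\in H_D(\Omega)$ and $v\in H_D(\Omega_0)$ (so $v$ vanishes on $\partial\Omega_0$ and outside $\bar\Omega_0$), integration by parts in $\Omega_0$ combined with $P=-c^2\Delta_g$ gives
\[
(u,v)_{H_D(\Omega)} \;=\; \int_{\Omega_0}\langle \nabla u, \nabla v\rangle_g\, \d\Vol,
\]
interpreted in the weak sense when $u$ is merely $H^1$.

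First I would establish well-posedness of $\Pi_0$. The bilinear form $a(h,v):=\int_{\Omega_0}\langle\nabla h,\nabla v\rangle_g\,\d\Vol$ is bounded and coercive on $H_0^1(\Omega_0)$, so by Lax–Milgram there is a unique $h\in H_0^1(\Omega_0)$ satisfying $a(h,v)=a(f,v)$ for every $v\in H_0^1(\Omega_0)$. This is exactly the weak form of \r{Ih}. Extending $h$ by zero across $\partial\Omega_0$ produces a function in $H_0^1(\Omega)$ (the smoothness of $\partial\Omega_0$ ensures the zero extension remains in $H^1$), hence $\Pi_0 f\in H_D(\Omega_0)\subset H_D(\Omega)$.

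For orthogonality, pick any $\phi\in H_D(\Omega_0)$. Since $\phi$ vanishes outside $\bar\Omega_0$,
\[
(f-\Pi_0 f,\phi)_{H_D(\Omega)} \;=\; \int_{\Omega_0}\langle\nabla(f-\Pi_0 f),\nabla\phi\rangle_g\,\d\Vol \;=\; a(f,\phi)-a(\Pi_0 f,\phi) \;=\; 0
\]
by the defining property of $\Pi_0$. Idempotency is immediate from uniqueness: if $f\in H_D(\Omega_0)$, then $h=f$ already solves \r{Ih} with zero trace on $\partial\Omega_0$, so $\Pi_0 f=f$. Combining these three facts identifies $\Pi_0$ as the orthogonal projection of $H_D(\Omega)$ onto $H_D(\Omega_0)$.

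There is no serious obstacle. The only item requiring mild care is the claim that the zero extension of $h$ lies in $H_D(\Omega)$; this is standard given the smoothness of $\partial\Omega_0$, and is what lets us legitimately view $\Pi_0$ as an operator on $H_D(\Omega)$ rather than just a Dirichlet solve confined to $\Omega_0$.
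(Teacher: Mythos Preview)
Your proof is correct and follows essentially the same approach as the paper: both reduce to the integration-by-parts identity that $(f-\Pi_0 f,\phi)_{H_D(\Omega)}=0$ for every $\phi\in H_D(\Omega_0)$, which you phrase as orthogonality of the remainder and the paper phrases as self-adjointness via $(\Pi_0 f_1,f_2)_{H_D}=(\Pi_0 f_1,\Pi_0 f_2)_{H_D}$. Your explicit invocation of Lax--Milgram for well-posedness is a clean addition where the paper simply cites ``standard energy estimates.''
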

\begin{proof}
By standard energy estimates, $\Pi_0$ is bounded. Clearly, $\Pi_0^2=\Pi_0$. To compute the adjoint, choose $f_{1,2}\in C_0^\infty(\Omega)$ and write
\[
\begin{split}
(\Pi_0f_1,f_2)_{H_D(\Omega)} &= (\Pi_0 f_1,f_2)_{H_D(\Omega_0)} 
=\int_{\Omega_0} \langle\nabla \Pi_0 f_1,\nabla\bar f_2\rangle_g\,\d \Vol\\
& = (  \Pi_0 f_1, P f_2 )_{L^2(\Omega_0)}   =  (  \Pi_0 f_1, P\Pi_0  f_2 )_{L^2(\Omega_0)} =  (  \Pi_0 f_1, \Pi_0  f_2 )_{H_D(\Omega_0)}. 
\end{split}
\]
In the same way, we show that $(f_1,\Pi_0f_2)_{H_D(\Omega)}$ equals the same;  therefore, $\Pi_0$ is self-adjoint on a dense set, and therefore a self-adjoint (bounded) operator. Clearly $\Pi_0$ preserves $H_D(\Omega_0)$. This completes the proof. 
\end{proof}

\section{Uniqueness and stability. 
Relation to unique continuation and boundary control} \label{sec_US} 
We formulate below a sharp uniqueness result following from the uniqueness theorem of Tataru \cite{tataru95}. Next, we recall that a sharp stability condition (and some of the uniqueness results) have already been given in the work  \cite{BardosLR_control} by Bardos, Lebeau and Rauch. 

Assume in what follows that $f$ is supported in $\bar\Omega_0$, where $\Omega_0\subset\Omega$ is sone a priori fixed domain which could be the whole $\Omega$ in the uniqueness theorems but we will eventually require $\Omega_0\Subset\Omega$ (i.e., $\Omega_0$ is open and $\bar\Omega_0\subset\Omega$). 

\subsection{Uniqueness} The sharp uniqueness condition is of the same form as in \cite{SU-thermo} but the proof here is more straightforward. We want to allow a signal from any point to reach $[0,T]\times\Gamma$. That poses the following lower bound $T_0$ for the sharp uniqueness time:
\be{US1}
T_0 := \max_{x\in\bar\Omega_0}\dist(x,\Gamma). 
\ee
This bound is actually sharp, as the next theorem shows.
\begin{theorem}[Uniqueness] \label{thm_u} 
$\Lambda f=0$ for some $f\in H_D(\Omega_0)$ implies $f(x) =0$ for  $\dist(x,\Gamma)< T$. In particular, if $T\ge T_0$, then $f=0$. 
\end{theorem}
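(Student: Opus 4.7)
The plan is to reduce the statement to Tataru's sharp unique continuation theorem \cite{tataru95}, using an even reflection in time and a zero extension across $\Gamma$.

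First, extend $u$ to negative times by setting $u(-t,x):=u(t,x)$. Since $\partial_t u|_{t=0}=0$, the resulting even extension is a distributional solution of $(\partial_t^2+P)u=0$ on $(-T,T)\times\Omega$ that still satisfies $\partial_\nu u|_{(-T,T)\times\bo}=0$. The hypothesis $\Lambda f=0$, combined with the Neumann condition, then yields that both the Dirichlet trace $u$ and the conormal trace $\partial_\nu u$ vanish on $(-T,T)\times\Gamma$.

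Second, smoothly extend $g$ and $c$ across $\Gamma$ to some open exterior neighborhood $W$ of $\Gamma$, and set $\tilde u \equiv 0$ on $(-T,T)\times W$. Because both Cauchy data of $u$ vanish on $\Gamma$, and because $\supp f\subset\bar\Omega_0\Subset\Omega$ ensures, via standard energy estimates, that $u$ has enough regularity at $\bo$, $\tilde u$ is a genuine distributional solution of $(\partial_t^2+P)\tilde u=0$ on $(-T,T)\times(\Omega\cup\Gamma\cup W)$ that vanishes on the spacetime open set $(-T,T)\times W$.

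Third, apply Tataru's theorem: iterated local unique continuation propagates the vanishing of $\tilde u$ to the full cone of dependence
\[
\{(t_0,x_0) : |t_0|+\dist(x_0,\Gamma)<T\},
\]
where $\dist$ denotes the Riemannian distance in the physical metric $c^{-2}g$. Evaluating at $t_0=0$ gives $f(x_0)=u(0,x_0)=0$ whenever $\dist(x_0,\Gamma)<T$, which is the first assertion. For the ``in particular'' clause, observe that if $T\ge T_0$ then $\dist(x,\Gamma)<T$ covers $\bar\Omega_0$ up to a closed set of measure zero (the level set where the distance to $\Gamma$ is realized), so $f=0$ as an element of $H_D(\Omega_0)$.

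The main obstacle I anticipate is the regularity check in the second step: one must verify that the zero extension actually produces a solution in the class to which Tataru's theorem applies, i.e., that the vanishing Dirichlet and conormal traces on $\Gamma$ are of sufficient order for the glued function $\tilde u$ to satisfy the wave equation across $\Gamma$ in the distributional sense. Because $\supp f\Subset\Omega$, this follows from standard boundary regularity for the wave equation with Neumann data, and the mild endpoint issue at $T=T_0$ is absorbed into the fact that $f$ is only determined up to a set of measure zero.
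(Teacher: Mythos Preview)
Your proof is correct and follows essentially the same route as the paper's: both reduce to Tataru's unique continuation \cite{tataru95} by observing that the hypothesis $\Lambda f=0$ together with the Neumann boundary condition furnishes full Cauchy data on $(0,T)\times\Gamma$, and the even extension in $t$ doubles the time interval. The paper's proof is a two-line citation (invoking \cite{tataru95} and \cite{SU-thermo_brain}), whereas you have spelled out the standard mechanism (even reflection, zero extension across $\Gamma$, propagation into the dependence cone); your regularity caveat about $\supp f\Subset\Omega$ is slightly over-cautious since Tataru's result applies to distributional solutions and the energy-space regularity of $u$ suffices even when $\Omega_0=\Omega$, but this does not affect the argument.
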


Clearly, if $T<T_0$, we cannot recover $f$ but we can still recover the reachable part of $f$. 

\begin{proof}
The proof follows directly from the unique continuation property of the wave equation, \cite{tataru95}. As in  \cite{SU-thermo_brain}, we have unique continuation from a neighborhood of any point on $\bo$ where we have Cauchy data. 
\end{proof}

\subsection{Stability} 
The stability condition is of microlocal nature, as it can be expected. The propagation of singularities theory, see section~\ref{sec_GO}, says that the singularities of $f$ starting from every point $(x,\xi)\in T^*\Omega\setminus 0$ split in two parts, propagating along the bicharacteristic  issued from $(x,\xi)$ and the other one along the bicharacteristic  issued from $(x,-\xi)$. The speed is one in the metric $c^{-2}g$, when the parameter is $t$.  
Those two singularities have equal energy, see the first identity in \r{o2f}.  The latter is due to the zero condition for $u_t$ at $t=0$. When each branch hits the boundary transversely, it reflects by the law of the geometric optics and the sign (and the magnitude) of the amplitude is preserved. The situation is more delicate when we have singularities with base points on $\bo$ or ones for which the corresponding rays hit $\bo$ tangentially. Then we can have a whole segment on $\bo$, called a gliding ray. The worst case is when they hit tangentially  concave points making an infinite contact with $\bo$. 
Those (non-smooth at $\bo$) curves are called generalized bicharacteristics and their projections to the base are called generalized geodesics. To avoid the difficulties mentioned above, we assume that $\bo$ is strictly convex w.r.t.\ the metric $c^{-2}g$ and that $\supp f\subset   \Omega$. Then all geodesics issued from $\supp f$ hit $\bo$ transversely (if non-trapping), and each subsequent contact is transversal, as well. The rays (the projections of the bicharacteristics on the base) then are piecewise smooth ``broken'' geodesics. We will formulate the analog of the Bardos-Lebeau-Rauch condition in this simpler situation. The only modification is to take into account that each singularity propagates in both directions with equal energy (what is important that neither of them is zero). Therefore, it is enough to detect only one of the two rays. 

\begin{definition} \label{def1}
Let $\bo$ be strictly convex with respect to $c^{-2}g$. 
Fix  $\Omega_0\Subset \Omega$, an open  $\Gamma\subset\Omega$ and $T>0$.

 (a)  We say that the stability condition is satisfied if every broken unit speed geodesic $\gamma(t)$  with $\gamma(0)\in \bar\Omega_0$ has at least one common  point  with $\Gamma $ for $|t|<T$, i.e., if    $\gamma(t)\in \Gamma$ for some $|t|<T$. 
 
 (b) We call the point $(x,\xi)\in T^*M\setminus 0$ a {visible singularity} if the unit speed geodesic $\gamma$ through $(x,\xi/|\xi|)$ has a common point with $\Gamma$ for $|t|<T$. We call the ones for which $\gamma$ never reaches $\bar\Gamma$ for $|t|\le T$ invisible ones. 
\end{definition} 

Common points of such  geodesics with $\bo$ are  the  points on $\bo$ where the geodesic reflects (transversely). Not that visible and invisible are not alternatives; the complement of their union  is the measure zero set of singularities corresponding to rays hitting $\bo$ for $|t|<T$ every time in  $\partial\Gamma$ only or hitting  $\bar\Gamma$ for the fist time  for $|t|=T$. 

Next theorem follows directly from \cite{BardosLR_control}, see Theorem~3.8 there.

\begin{theorem}\label{thm_st}
Let $\bo$ be strictly convex and fix  $\Omega_0\Subset \Omega$, an open $\Gamma\subset\Omega$ and $T>0$. Then if the stability condition is satisfied, 
\[
\|f\|_{H_D}\le C\|\Lambda f\|_{H^1((0,T)\times\Gamma)}.
\]
\end{theorem}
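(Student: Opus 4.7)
The plan is to deduce this observability (stability) estimate directly from the Bardos--Lebeau--Rauch control theorem (Theorem 3.8 of \cite{BardosLR_control}), after a symmetrization step that accounts for the vanishing of $\partial_tu|_{t=0}$ and a verification that the geometric control condition is equivalent to the condition in Definition~\ref{def1}(a) under our hypotheses.

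The first step is to reduce to a two-sided time interval. Since $f\in H_D(\Omega_0)$ and $\partial_tu|_{t=0}=0$, the solution to \r{1} satisfies $u(-t,x)=u(t,x)$, so $u$ is a smooth Neumann wave on $[-T,T]\times\Omega$ with initial data $(f,0)$ at $t=0$, and its Dirichlet trace on $[-T,T]\times\Gamma$ is the even reflection of $\Lambda f$. In particular $\|u|_{[-T,T]\times\Gamma}\|_{H^1}\le \sqrt{2}\,\|\Lambda f\|_{H^1((0,T)\times\Gamma)}$ up to a harmless boundary contribution at $t=0$ (which vanishes since $\partial_tu|_{t=0}=0$).

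Next I want to apply BLR: for the Neumann wave equation, observability of the Dirichlet trace on $[-T,T]\times\Gamma$ gives
\[
\|(f_1,f_2)\|^2_{\mathcal{H}_N}\le C\,\|u|_{[-T,T]\times\Gamma}\|^2_{H^1},
\]
provided every generalized bicharacteristic starting at $t=0$ over $\Omega$ meets $\Gamma$ within time $T$ (in the sense of \cite{BardosLR_control}). Strict convexity of $\bo$ with respect to $c^{-2}g$ rules out gliding rays, so the generalized bicharacteristics above $\mathrm{supp}\,f\subset\bar\Omega_0\Subset\Omega$ are honest broken bicharacteristics whose base projections are the broken geodesics of Definition~\ref{def1}. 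The fact that the Cauchy data are of the form $(f,0)$ splits each singularity at $(x,\xi)$ equally between the bicharacteristics through $(x,\xi)$ and $(x,-\xi)$ with equal energy (the first identity in \r{o2f}), so detection of only one of the two orbits suffices. Since we observe on both $[0,T]$ and (after even reflection) $[-T,T]$, it is enough to require that for every $(x,\xi)\in T^*\Omega\setminus0$ with $x\in\bar\Omega_0$, at least one of the two broken geodesics from $(x,\pm\xi)$ hit $\Gamma$ for some $|t|<T$; this is exactly the stability condition of Definition~\ref{def1}(a). The BLR observability estimate, applied with $(f_1,f_2)=(f,0)$, then yields
\[
\|f\|_{H_D}^2=\|(f,0)\|^2_{\mathcal{H}_N}\le C\,\|\Lambda f\|^2_{H^1((0,T)\times\Gamma)},
\]
which is the claim.

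The main obstacle is not analytic but bookkeeping: one must verify that the version of BLR needed (Neumann boundary conditions, Dirichlet observation, generalized bicharacteristic control condition) really is the content of \cite[Thm.~3.8]{BardosLR_control}, and that under strict convexity of $\bo$ the abstract generalized-bicharacteristic GCC collapses to the broken-geodesic condition of Definition~\ref{def1}. The only subtlety on our side is the factor-of-two gain obtained from the symmetry $u(-t,x)=u(t,x)$, which is precisely what matches our one-sided time interval $[0,T]$ with the symmetric observation interval required by BLR when the initial velocity vanishes.
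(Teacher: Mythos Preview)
Your approach is the same as the paper's: the paper gives no proof beyond the sentence ``Next theorem follows directly from \cite{BardosLR_control}, see Theorem~3.8 there,'' and you have simply spelled out the reduction --- the even-in-$t$ extension to $[-T,T]$ and the identification, under strict convexity and $\Omega_0\Subset\Omega$, of the BLR geometric control condition with the broken-geodesic condition of Definition~\ref{def1}(a). The digression on the equal splitting of singularities via \r{o2f} is not needed for the sufficiency direction (it is relevant only for necessity), but it does no harm.
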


\section{Complete data. Review of the sharp time reversal} \label{sec_TR}
\subsection{Sharp time reversal}

Assume we have complete data, i.e., $\Gamma=\bo$ (but $T<\infty$). In what follows, we adopt the notation $u(t)=u(t, \cdot)$.  
In the time reversal step, to satisfy the compatibility conditions at $t=T$, we choose $v(T)$ to be the harmonic extension $\phi= \mathcal{P}(\Lambda f(T))$ of   $\Lambda f(T)$. Since $P=-c^2\Delta_g$, $\phi$ solves $P\phi=0$ as well  since $c^2$ cancels in the equation $-c^2\Delta_g\phi=0$, see \r{C1}. 
Solve 
\begin{equation}   \label{T1}
\left\{
\begin{array}{rcll}
(\partial_t^2 +P)v &=&0 &  \mbox{in $(0,T)\times \Omega$},\\
  v|_{(0,T)\times\bo}&=&h,\\
v|_{t=T} &=& \mathcal{P}h(T),\\ \quad \partial_t v|_{t=T}& =&0, 
\end{array}
\right.               
\end{equation} 
where, eventually, we will set $h=\Lambda f$, and set 
\be{I3}
Ah : = v(0). 
\ee
Then we think of $A\Lambda f$ as the time reversed data. In the multiwave tomography model in the whole space, $A\Lambda$ is often used as an approximation for $f$ at least when $T\gg1$. In our case, we cannot expect that but we still define the ``error'' operator $K$ by  
\[
A\Lambda = \Id-K. 
\]

To analyze $K$, let $w=u-v$ be the ``error''. Then $w$ solves
\begin{equation}   \label{I4}
\left\{
\begin{array}{rcll}
(\partial_t^2 +P)w &=&0 &  \mbox{in $(0,T)\times \Omega$},\\
  w|_{(0,T)\times\bo}&=&0,\\
w|_{t=T} &=& \Pi u(T)\\ \quad \partial_t w|_{t=T}& =& \partial_t u|_{t=T}. 
\end{array}
\right.               
\end{equation} 
Then
\be{I5}
Kf  = w(0). 
\ee
This yields the following for the operator $K : H_N \to   H_D$:
\be{I6}
K = \pi_1 \mathbf{U}_D(-T)\mathbf{\Pi} \mathbf{U}_N(T) \pi_1^* ,
\ee
where $\pi_1(f_1,f_2) := f_1$, $\pi_1^*f :=(f,0)$. Obviously,
\be{I7a}
\|K\|_{H_N\to H_D}\le 1. 
\ee
 We cannot expect $K$ to be a contraction anymore ($\|K\|<1$) for large $T$.  By constructing high-frequency solutions propagating along a single broken geodesic (in both directions), one can actually show that $\|K\|=1$ and finding $f$ from  $(\Id-K)f$ cannot be done in a stable way, at least.  This also follows from the analysis in Section~\ref{sec_GO}.  In Figure~\ref{fig1}, we present a numerical example illustrating what happens if we use that form of time reversal.

\subsection{A slightly different representation} 
 Set $\tilde v= v-\mathcal{P}(\Lambda f(T))$ with $v$ as in \r{T1}. Then $\tilde v$ solves
\begin{equation}   \label{T2}
\left\{
\begin{array}{rcll}
(\partial_t^2 +P)\tilde v &=&0 &  \mbox{in $(0,T)\times \Omega$},\\
 \tilde v|_{(0,T)\times\bo}&=&\Lambda f(t)   -\Lambda f(T)     ,\\
\tilde v(T) =\partial_t\tilde  v(T)& =&0. 
\end{array}
\right.               
\end{equation} 
Then 
\be{I7}
A\Lambda f =\tilde  v(0) + \mathcal{P}(\Lambda f(T)). 
\ee
Therefore, to compute $A\Lambda f$, we solve \r{T2} and then use \r{I7}.

When $f$ is a priori supported in $\bar\Omega_0$, we use $A_0:=\Pi_0A$ as time reversal, see Lemma~\ref{lemma_Pi1}. 
Then $A_0 \Lambda = \Pi_0-\Pi_0K$, and restricted to $H_D(\Omega_0)$, we have $A_0= \Id+K_0$, $K_0 := \Pi_0K$. Then in  \r{I6}, we apply $\Pi_0$ to the right to get $K_0$. 

\section{Averaged Time Reversal for complete data}
The main idea is to average the sharp time reversal above over a time interval. Let $T(\Omega)$ be the length of the longest geodesic in $\bar\Omega$. We assume that $\Omega$ is strictly convex with respect to $c^{-2}g$ and non-trapping, i.e., $T(\Omega)<\infty$. 

Fix $T>0$. Eventually, we will require  $T(\Omega)/2< T$. For  $\tau\le T$, let $A(\tau)$ be the time reversal operator $A$ constructed above with $T=\tau$. In \r{T2}, we can prescribe zero Cauchy data for $t>\tau$ and solve the problem on the interval $t\in [0,T]$ by extending the boundary condition $\Lambda f(t)   -\Lambda f(\tau)$ as zero for $t>\tau$ (which is a continuous extension across $t=\tau$). In other words, 
\be{I8}
A(\tau)\Lambda f =\tilde  v^\tau(0)+ \mathcal{P} (\Lambda f(\tau)), 
\ee
where $\tilde v^\tau $  solves (we drop the superscript $\tau$ below)
\begin{equation}   \label{T3}
\left\{
\begin{array}{rcll}
(\partial_t^2 +P)\tilde v &=&0 &  \mbox{in $(0,T)\times \Omega$},\\
 \tilde v|_{(0,T)\times\bo}&=& H(\tau -t) \left(\Lambda f (t)  -\Lambda f(\tau)  \right)   ,\\
\tilde v(T) =\partial_t\tilde  v(T)& =&0,
\end{array}
\right.               
\end{equation} 
where $H$ is the Heaviside function. 

Let $0\le \chi\in L^\infty([0,T])$ have integral one. 
Then we average $A(\tau)$ over $[0,T]$ with weight $\chi$. The result is an averaged time reversal operator
\be{A}
\mathcal{A} : = \int_{0}^{T} \chi(\tau) A(\tau)\,\d \tau. 
\ee
As explained in the Introduction, and will be proven in Section~\ref{sec_GO}, the averaged time reversal  restores all singularities with positive but not necessarily equal amplitudes, see also Figure~\ref{fig2}, which is the improvement we seek. 

To compute $\A$, we average both sides of \r{T3}. In other words, we do the time reversal in \r{T3} with boundary condition
\be{TR}
\begin{split}
 h(t):=    & \int_{0}^{T}  \chi(\tau)  H(\tau-t) \left(\Lambda f(t)   -\Lambda f(\tau) \right)\,\d \tau\\
 & =     \int_t^{T}    \chi(\tau)  \left(\Lambda f(t)   -\Lambda f(\tau) \right)\,\d \tau .
 \end{split}
\ee
Then we solve
\begin{equation}   \label{T4}
\left\{
\begin{array}{rcll}
(\partial_t^2 +P) v &=&0 &  \mbox{in $(0,T)\times \Omega$},\\
  v|_{(0,T)\times\bo}&=& h(t)  ,\\
 v(T) =\partial_t  v(T)& =&0,
\end{array}
\right.               
\end{equation} 
and set 
\be{AA}
\mathcal{A}\Lambda f = v(0) + \mathcal{P} \int_0^T\chi(\tau)\Lambda f(\tau)\,\d\tau .
\ee
Next, we project the result onto $H_D(\Omega_0)$ by taking $\Pi_0\mathcal{A}\Lambda f$ to be our time reversed version.  
The projection of the last term above vanishes (because it is harmonic), and we get
\[
\mathcal{A}_0\Lambda f = \Pi_0 v(0).
\]

 Let us also note that $h(t)$ can be expressed as
\be{h2}
 h(t) = \int_t^T\chi(\tau)\,\d\tau \cdot \Lambda f(t) -  \int_t^T \chi(\tau)\Lambda f(\tau)\,\d\tau. 
 \ee 

The next theorem gives an explicit inversion of $\Lambda$ on functions a priori  supported in $\Omega_0$. 
\begin{theorem}  \label{thm2.1} Let $(\Omega, c^{-2}g)$ be non-trapping, strictly convex, and let  $T(\Omega)/2< T$. Let $\Omega_0\Subset\Omega$. 
Then $\mathcal{A}_0\Lambda=\Id-\mathcal{K}_0$ on $H_D(\Omega_0)$, where $\mathcal{K}_0$ is compact in $H_{D}(\Omega_0)$, and   $\|\mathcal{K}_0\|_{\mathcal{L}(H_{D}(\Omega_0))}<1$. 
In particular, $\Id-\mathcal{K}_0$ is invertible on $H_{D}(\Omega_0)$, and the inverse  problem has an explicit solution of the form
\be{2.2}
f = \sum_{m=0}^\infty \mathcal{K}_0^m \mathcal{A}_0h, \quad h:= \Lambda f.
\ee
\end{theorem}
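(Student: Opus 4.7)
The plan is to (a) unwind the construction to write $\mathcal{A}_0\Lambda = \Id - \mathcal{K}_0$ with an explicit $\mathcal{K}_0$, (b) observe the a priori bound $\|\mathcal{K}_0\|\le 1$, (c) invoke the microlocal analysis of Section~\ref{sec_GO} together with the hypothesis $T>T(\Omega)/2$ to upgrade to a strict contraction and to compactness, and (d) close via a Neumann series.

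Step (a) is direct: since $A(\tau)\Lambda = \Id - K(\tau)$ with $K(\tau)$ as in \r{I6} (with $\tau$ in place of $T$), averaging against $\chi$ and composing with the orthogonal projection $\Pi_0$ of Lemma~\ref{lemma_Pi1} gives, on $H_D(\Omega_0)$,
\[
\mathcal{A}_0\Lambda = \Id - \mathcal{K}_0,\qquad \mathcal{K}_0 := \Pi_0\int_0^T \chi(\tau)K(\tau)\,\d\tau,
\]
using $\int_0^T\chi(\tau)\,\d\tau=1$. For (b), Lemmas~\ref{lemma_Pi} and \ref{lemma_Pi1}, the unitarity of $\mathbf{U}_N(\tau)$ and $\mathbf{U}_D(-\tau)$ on their energy spaces, and the norm-one inclusions $H_D(\Omega_0)\hookrightarrow H_D(\Omega)\hookrightarrow H_N(\Omega)$ give $\|\Pi_0 K(\tau)\|_{\mathcal{L}(H_D(\Omega_0))}\le 1$ for every $\tau$; Jensen's inequality in $\tau$ with weight $\chi$ then yields $\|\mathcal{K}_0\|\le 1$.

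Step (c) is the heart of the matter. Using the geometric-optics parametrix of Section~\ref{sec_GO}, I track a singularity $(x,\xi)\in T^*\Omega_0\setminus 0$ through $K(\tau)$: it splits into two symmetric half-amplitude packets propagating along the broken bicharacteristics issued from $(x,\pm\xi)$; Neumann reflections preserve sign while the Dirichlet backward propagation flips it at every $\bo$ contact. The principal symbol of $K(\tau)$ at $(x,\xi)$ therefore carries the signs $(-1)^{m_\pm(\tau;x,\xi)}$, where $m_\pm$ counts the $\bo$-reflections of the corresponding branch before time $\tau$. Strict convexity and non-trapping make $\tau\mapsto m_\pm$ piecewise-constant nondecreasing step functions, and the exit times $a,b$ from $x$ in directions $\pm\xi$ satisfy $a+b\le T(\Omega)<2T$, so $\min(a,b)<T$ and at least one $m_\pm$ must jump inside $[0,T]$. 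Integration against the nonnegative $\chi$ with $\int\chi=1$ then produces strict cancellation, bounding $|\sigma(\mathcal{K}_0)(x,\xi)|$ by some $c_0<1$ uniformly in $(x,\xi)$. Compactness of $\mathcal{K}_0$ on $H_D(\Omega_0)$ follows from the same microlocal picture: the $\tau$-integration against a bounded $\chi$ damps the oscillatory kernels at high frequencies (a Riemann--Lebesgue-type argument in the spectral variable), and the lower-order remainder is compact by the embedding $H^{1+\eps}(\Omega_0)\Subset H_D(\Omega_0)$.

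For (d), given compactness and $\|\mathcal{K}_0\|\le 1$, the operator $\mathcal{K}_0^*\mathcal{K}_0$ is compact, self-adjoint and bounded by $1$, so $\|\mathcal{K}_0\|^2$ equals its largest eigenvalue. A unit eigenvector with eigenvalue $1$ would saturate the Jensen inequality of (b) and realize the principal-symbol bound at some $(x,\xi)$, contradicting (c); hence $\|\mathcal{K}_0\|<1$, and the Neumann series \r{2.2} converges geometrically in $H_D(\Omega_0)$. The main obstacle is step (c): building the microlocal parametrix for the Neumann--$\mathbf{\Pi}$--Dirichlet composition at broken bicharacteristics and carrying out the reflection-parity bookkeeping carefully enough to get a symbol bound strictly and uniformly below one after the $\tau$-averaging.
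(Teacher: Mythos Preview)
Your steps (a) and (b) match the paper, and the symbol computation in (c)---that $\mathcal{K}_0$ has principal symbol given by an alternating reflection-parity sum, bounded strictly below $1$ when $T>T(\Omega)/2$---is exactly the content of Lemma~\ref{lemma_main} and Lemma~\ref{lemma_p}. The problems are in how you use this.

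First, the compactness claim for $\mathcal{K}_0$ does not follow from $\tau$-averaging by any Riemann--Lebesgue mechanism. The very microlocal analysis you invoke shows that $\mathcal{K}_0$ is (modulo smoothing) a zeroth-order \PDO\ with principal symbol $\kappa$, and $\kappa$ is generically nonzero; such an operator is not compact. The averaging causes cancellation in the \emph{value} of the symbol, not in its order. What the paper actually proves (Lemma~\ref{lemma_R}) is the weaker statement that the essential spectrum of $\mathcal{K}_0^*\mathcal{K}_0$ lies in $[0,\kappa_0^2]$ with $\kappa_0<1$, via a G\aa rding-type bound from $|\kappa|\le\kappa_0$. (The word ``compact'' in the theorem statement is imprecise in this respect; what is used, and proved, is only this essential spectral gap.)

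Second, and more serious, your step (d) argument that $1$ is not an eigenvalue is not valid. Saturating the Jensen chain in (b) yields $\|\mathbf{\Pi}\mathbf{U}_N(\tau)\pi_1^*f\|=\|\mathbf{U}_N(\tau)\pi_1^*f\|$ for all $\tau$, but this says nothing about the principal symbol at any $(x,\xi)$: a putative eigenfunction is a single fixed function, and the bound $|\kappa|<1$ is a high-frequency asymptotic statement that controls the essential spectrum, not the point spectrum. The paper closes this gap by a different route: the equality above forces the boundary trace of $\pi_1\mathbf{U}_N(\tau)\pi_1^*f$ to vanish for every $\tau\in[0,T]$, i.e., $\Lambda f\equiv 0$ on $(0,T)\times\bo$; then Tataru's unique continuation (Theorem~\ref{thm_u}, using $T>T(\Omega)/2\ge T_0$) gives $f=0$. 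You need that unique continuation step; the symbol bound alone cannot exclude the eigenvalue $1$.
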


\begin{proof}
We divide the proof in several steps.  

(i) We notice first that 
\[
\|\mathcal{K}_0\|_{\mathcal{L}(H_{D}(\Omega_0))} \le1.
\]
This follows immediately from the following, see \r{I6}
\be{I10}
\mathcal{K}_0 =  \Pi_0\int_{0}^{T} \chi(\tau) \pi_1\mathbf{U}_D(-\tau)\mathbf{\Pi U}_N(\tau)\pi_1^*\,\d \tau
\ee
and Lemma~\ref{lemma_Pi}. Indeed, for $f\in H_D(\Omega_0)$, 
\be{I12}
\begin{split}
\|\mathcal{K}_0f\|_{H_D(\Omega_0)}&\le 
 \int_{0}^{T}\chi(\tau) \|\pi_1\mathbf{U}_D(-\tau)\mathbf{\Pi} \mathbf{U}_N(\tau)\pi_1^*f\|_{H_D(\Omega_0)} \,\d \tau \\
&\le \int_{0}^{T}  \chi(\tau) \|\mathbf{\Pi}\mathbf{ U}_N(\tau)\pi_1^*f\|_{H_D(\Omega)}\,\d \tau\\
&\le  \int_{0}^{T} \chi(\tau)  \|\mathbf{U}_N(\tau)\pi_1^*f\|_{H_N(\Omega)}\,\d \tau = \|f\|_{H_D(\Omega_0)}. 
\end{split}
\ee

(ii) By unique continuation, 
\be{I11}
\|\mathcal{K}_0f\|_{H_D(\Omega_0)}<\|f\|_{H_D(\Omega_0)}, \quad f\not=0. 
\ee
Indeed, if we assume equality above, then all inequalities in \r{I12} are equalities. Then
\[
\mathbf{\Pi U}_N(\tau)\pi_1^*f =  \mathbf{U}_N(\tau)\pi_1^*f, \quad  0\le \tau \le T. 
\]
By the the definition of $\mathbf{\Pi}$ in  Lemma~\ref{lemma_Pi}, the first component of the right hand side must vanish on $\bo$, i.e., 
$\pi_1 \mathbf{ U}_N( \tau)\pi_1^*f=0$ on $(0,T)\times\bo$. 
By the uniqueness theorem, $f=0$ since $T>T(\Omega)/2$ implies that the condition on $T$ in Theorem~\ref{thm_u} is satisfied. 

(iii)  
The essential spectrum of  $\mathcal{K}_0^*\mathcal{K}_0$ is included in $[0,1-\epsilon]$ with some $\epsilon>0$. 
We prove this below in Lemma~\ref{lemma_R}. 

(iv) The operator $\mathcal{K}_0$ is a contraction, i.e., 
\[
\|\mathcal{K}_0\|_{\mathcal{L}(H_{D}(\Omega_0))}<1. 
\]
It is enough to prove that the self-adjoint operator $\mathcal{K}_0^*\mathcal{K}_0$ is a contraction. 
By (iii) above,  the spectrum of $\mathcal{K}_0^*\mathcal{K}_0$ near $1$ consists of eigenvalues only, see \cite[VII.3]{Reed-Simon1}. One the other hand, $1$ cannot be an eigenvalue because then for the corresponding eigenfunction $\phi$ we would have $\mathcal{K}_0^*\mathcal{K}_0 \phi=\phi$, therefore, $\|\mathcal{K}_0 \phi\|^2=\|\phi\|^2$, which contradicts \r{I11}. 
\end{proof}

A numerical reconstruction with a variable speed based on the theorem is presented in Figure~\ref{fig3}.

\section{Geometric Optics and proof of the main lemma} \label{sec_GO}
We recall here some well-known facts about the reflection of singularities of solutions of the wave equation for transversal rays; both for the Dirichlet and the Neumann boundary conditions. In what follows, the notation  ${A}\cong{B}$ for two operators in ${H}_D$ indicates that they differ by a compact one. Similarly, ${Af}\cong{Bf}$ means that ${Af}\cong{(B+K)f}$ with ${K}$ compact. All \PDO s will be applied to functions supported in $\bar\Omega_0$ and will be assumed to have a Schwartz kernels supported in $\Omega\times\Omega$. Also, $|\cdot|_g$ is the norm of a vector or a covector, depending on the context, in the metric $g$; while $|\cdot|$ is the norm in the metric $c^{-2}g$.

A parametrix for the solution of the wave equation $(\partial_t^2-c^2\Delta_g)u=0$ with Cauchy data $(u,u_t)=(f_1,f_2)$ at $t=0$ in the whole space is constructed as
\be{o1}
u(t,x) =  (2\pi)^{-n} \sum_{\sigma=\pm}\int e^{\i\phi_\sigma(t,x,\xi)}\left( a_{1,\sigma}(x,\xi,t) \hat f_1(\xi)+  |\xi|_g^{-1}a_{2,\sigma}(x,\xi,t) \hat f_2(\xi)\right) \d \xi,
\ee
modulo  terms involving smoothing operators of $f_1$ and $f_2$. The reasons for the two terms is that the principal symbol $-\tau^2+c^2|\xi|_g^2$ of the wave operator has two smooth components  (away from the origin) of its characteristic variety: $\Sigma_\pm := \{\tau \pm c|\xi|_g=0\}$. Based on \r{o1}, we can write $u=u_++u_-$ (modulo smoothing terms), where $u_\pm$ solves the ``half wave equation'' $(-\i\partial_t\pm \sqrt{P})u_\pm=0$. 
The initial conditions are 
\[
(u_\pm,\partial_t u_\pm) |_{t=0}= \mathbf{\Pi}_\pm (f_1,f_2),
\]
where
\be{Pi'}
\mathbf{\Pi}_+ = 
\frac12 \begin{pmatrix}  1   &-\i P^{-1/2} \\\i P^{1/2}&1 \end{pmatrix},\quad 
\mathbf{\Pi}_- = \frac12 \begin{pmatrix}  1   &\i P^{-1/2} \\ -\i P^{1/2}&1 \end{pmatrix}.
\ee
In those equations, $P^{-1/2}$ can be considered as parametrix but the equations  are actually exact in $\R^n$, see the appendix in \cite{S-U-InsideOut10}. Note that $\mathbf{\Pi}_\pm$ are orthogonal projections in $\mathcal{H}(\R^n)$ and their sum is identity. The orthogonality is preserved under the dynamics in $\R^n$. 

The phase functions above solve the eikonal equations $\partial_t\phi_+ \pm c^2| \partial_x\phi_+ |_g=0$, $\phi_\pm|_{t=0}= x\cdot\xi$. If $P=-\Delta$, then $\phi_\pm = \mp |\xi|t+x\cdot\xi$; and the zero bicharacteristics are given by $(t,x)=(t_0+s,x_0\pm s\xi/|\xi|)$, with $\tau$ and $\xi$ constant and on $\Sigma\pm$.  
   That equation can be solved in general  for small $t$  only  if $(x,\xi)$ are restricted to a compact set. The amplitudes $a_{j,\sigma}$ are classical, of order $0$ solve the corresponding transport equations below and their leading terms  satisfy the initial conditions
\be{o2f}
a_{1,+}^{(0)}=a_{1,-}^{(0)}=\frac12,\quad  a_{2,+}^{(0)}=-a_{2,-}^{(0)}= \frac{\i}{2c(x)}\quad\text{for $t=0$}. 
\ee
The  transport equations for the principle terms have the form 
\be{tr}
\left( (\partial_t \phi_\pm) \partial_t - c^{2}g^{ij} \partial_{x^i}\phi_\pm \partial_{x^j}+C_\pm \right)a_{j,\pm}^{(0)}=0, 
\ee
with $C_\pm$ a smooth multiplication term. 
This is an ODE along the vector field $(-\partial_t\phi_\pm, c^2g^{-1}\partial _x\phi_\pm )$ (the second term is just the covector $\partial_x \phi_\pm$ identified  with a vector by the metric $c^{-2}g$) , and the integral curves of it coincide with the geodesic  curves $(t,\gamma_{z,\pm \xi}(t))$, with the metric identification of the tangent and the cotangent bundle. Given an initial condition at $t=0$, it has a unique solution along the integral curves as long as $\phi_\pm $ is well defined. Here and below, we denote by $\gamma_{z, \xi}(t)$ the   geodesic through $(x,\xi)$.

Assume that the wave front of $(f_1,f_2)$ is contained in a small conic neighborhood of some $(x_0,\xi^0)$. Let the constriction above be valid in some neighborhood of the segment of $(t,\gamma_{x_0,\xi^0}(t))$  until it hits $\R_+\times\bo$ (and  a bit beyond). We will work with $u_+$ only that we call just $u$, and we drop the subscript $+$ for the phase function, etc., below. 
 Let $u^R(t)$ be the reflected $u$ constructed as follows. 
Define the exit time $\tau$ by the condition
\be{tau}
\tau_\pm (x,\xi) = \left\{ \pm t\ge0;\;\gamma_{x, \xi}(t)\in\bo \right\}.
\ee
The function $\tau$ is positively homogeneous in $\xi$ or order $-1$. 
Fix boundary normal coordinates $(x',x^n)$ on $\bo$ near the reflection point so that $x^n=0$ defines $\bo$ locally and $x^n<0$ in $\Omega$, and the metric $c^{-2} g$ takes the form $(\d x^n)^2 + g'_{\alpha\beta}\d x^\alpha \d x^\beta$, $0\le\alpha,\beta\le n-1$. 
Restrict $u$ to $\R_+\times\bo$. Then $u$ would look like the $\sigma=+$ term in \r{o1} with $\phi=\phi|_{x^n=0}$, $a_{j}=  a_{j}|_{x^n=0}$ (recall that we dropped the $+$ subscript).  The map $F: (u,\partial_tu)|_{t=0}\mapsto u|_{\R_+\times\bo}$ (which is the operator $\Lambda$ in the commonly used model in the whole space, microlocally restricted) is an FIO of order $(0,-1)$ with a canonical relation associated to the diffeomorphism  $C_+$, where \cite{SU-thermo}:
\be{CR}
C_\pm :(x,\xi) \longmapsto\left(\pm \tau_\pm(x,\xi/|\xi|), \gamma_{x,\xi}( \tau_+(x,\xi)),\mp|\xi|,\dot\gamma'_{x,\xi}(\tau_\pm(x,\xi))\right),
 \ee
 where the prime stands for a projection on $T^*\bo$, and we identify vectors and covectors by the metric $c^{-2}g$. The map $C_-$ corresponds to $u_-$. 
The range of $C_\pm$ is in a compact subset of the hyperbolic regions $|\xi'|<\mp \tau$  of $T^*(\R\times\bo)$.  
 
  Its parametrix is the backprojection:  $ u|_{\R_+\times\bo}\mapsto (u,\partial_t u)|_{t=0}$ constructed as the restriction to $t=0$  of the incoming solution  of the boundary value problem (the one with smooth Cauchy data for $t\gg1$)  and boundary data  $ u|_{\R_+\times\bo}$, see also \cite{SU-SAR}. 
  
  We seek a parametrix for the reflected solution $u^R$ in the form
\be{o2}
u^R(t,x) =  (2\pi)^{-n} \int e^{\i\phi^R(t,x,\xi)}\left( a_{1}^R(x,\xi,t) \hat f_1(\xi)+  |\xi|_g^{-1}a_{2}^R(x,\xi,t) \hat f_2(\xi)\right) \d \xi.
\ee
In other words, $u^R = RF (f_1,f_2)$, where $R$ is the reflection operator, defined correctly because $F$ is microlocally invertible.  The phase function solves the eikonal equation 
\be{1.24a}
\partial_t\phi^R +c(x)|\nabla_x\phi^R|_g=0,\quad \phi^R|_{x^n=0}=\phi.
\ee
One such solution is $\phi$ itself (denoted above by $\phi_+$) and $\phi^R$ is the other one. They can be distinguished by the sign of their normal derivatives $\partial_\nu \phi$, $\partial_\nu \phi^R$ on $\R_+\times \bo$  which is positive for $\phi$ and negative for $\phi^R$. That derivative is as in \r{symb} without the factor $i$.  The amplitudes $a_1^R$ and $a_2^R$ solve the transport equations with initial data on $\R_+\times \bo$ equal to $a_1$ and $a_2$, respectively. Not that those transport equations are ODEs along the reflected geodesic. 

Since $\partial_\nu u$ and $\partial_\nu u^R$ have opposite signs of their principal terms, 
$u_N:= u+u^R$  satisfies the Neumann boundary conditions up to lower order terms. One can construct the whole reflected amplitudes this way but for our purposes, we just need the ``error'' term to be a compact operator. 
On the other hand, $u_D := u-u^R$ satisfies the Dirichlet boundary condition up to lower order terms. 
 
In particular, we recover the well know fact that Neumann boundary condition reflects the ``wave'' without a sign change, while the Dirichlet boundary condition alters the sign.

We can remove the condition now that the geometric optics construction \r{o1} is valid all the way to the boundary. The map from the Cauchy data $(f_1,f_2)$ to the solution $\mathbf{u}(t)$ at any given $t$ is an invertible FIO. Fix $t_0$ not exceeding the time it takes for the geodesic $(t,\gamma_{x_0,\xi^0})$ to hit the boundary but close enough to it. Then we repeat the arguments above with $u$ as in \r{o2} but with Cauchy data $(u,\partial_t u)|_{t=t_0}$ at $t=t_0$.

That phenomenon can be understood by studying the corresponding outgoing Dirichlet-to-Neumann (DN) map $N_\text{out}: u|_{\R\times\bo}\mapsto \partial_\nu u|_{\R\times\bo}$ for $u$ smooth for $t\ll0$) and the corresponding incoming one $N_\text{in}$, defined in the same way but requiring $u$ to be smooth for $t\gg0$. As follows from \r{o2} (and it is well known in scattering theory), they are both \PDO s on the hyperbolic conic set $c|\xi'|_g<|\tau|$ where the range of $C_\pm$ belongs, see \r{CR}, with opposite same principal symbols. The representation \r{o2}, see \cite{SU-thermo_brain} for details, implies that those principal symbols are 
\be{symb}
\pm \i\sqrt{c^{-2}\tau^2-|\xi'|^2_g},
\ee
where the positive sign is for the incoming one. In the hyperbolic conic set $c|\xi'|_g<|\tau|$, those symbols are elliptic, therefore $N_\text{out}^{-1}N_\text{in}\cong-\Id$ and $N_\text{in}^{-1} N_\text{out}\cong -\Id$ modulo lower order \PDO s, with the inverse meaning a parametrix.  Then in the construction above, given $u$ on $\bo$, we seek $u_R$ on the boundary  as the solution of $N_\text{in} u+ N_\text{out}u_R=0$ which implies $u_R\cong u$ on the boundary; hence  the Dirichlet data of the reflected solution is twice that of the incoming one, modulo lower order terms. 

We deifine another relevant map. Given boundary data $h$ microlocalized near some $(t,x,\tau,\xi')$ in the hyperbolic domain $|\xi'|<-\tau$ (related to the positive sign in \r{CR}), let $u$ be the outgoing solution (smooth for $t\ll1$) with that boundary data, extended until the corresponding geodesic hist $\bo$ again, and slightly beyond. Let $Gh= u|_{\R\times\bo}$ be the trace on the boundary there. Then $G$ is an elliptic FIO of order zero with a canonical relation given by the graph of
\be{CR1}
C_b :(t,x,\tau,\xi) \longmapsto\left(t+ \tau_+(x,\xi/|\xi|), \gamma_{x,\xi}( \tau_+(x,\xi)),\mp|\xi|,\dot\gamma'_{x,\xi}(\tau_\pm(x,\xi))\right).
 \ee

We are ready to analyze the reflection of singularities now. 
We can represent the solution of the forward Neumann problems as 
\be{chain1}
\mathbf{f}= (f_1,f_2)\quad \longmapsto  2F\mathbf{f}   \quad  \longmapsto\quad   2GF\mathbf{f}\quad  \longmapsto\quad 2G^2F\mathbf{f} \quad 
\longmapsto \dots,
\ee
where we start with the Cauchy data at $t=0$, the second term  is the Dirichlet data at the first reflection near $\tau_1:=\tau(x,\xi)$;  then the second reflection near $\tau_2(x,\xi)$, etc. 

To understand the backprojection with given Dirichlet data, note first that 
the backprojection of the Dirichlet data of $u_N\cong 2u_1$ on $\bo$ near the first reflection  to $t=0$ is just $\cong F^{-1}2F\mathbf{f}\cong  2\mathbf{f}$, i.e., we get  
\be{chain1a} 
u_N|_{\R_+\times\bo, \text{near 1st reflection}}\quad  \longmapsto\quad 2\mathbf{f}.
\ee
   Let us backproject the Dirichlet data $u_N|_{\R_+\times\bo}$ (for $t$ near $\tau_2(x,\xi/|\xi)$) in  \r{chain1} \textit{under the assumption that at the first reflection near $\tau_1$ the Dirichlet condition is zero}. We get
   
\be{chain2}
u_N|_{\R_+\times\bo, \text{near 2nd reflection}}  \quad \longmapsto\quad  0|_{\R_+\times\bo, \text{near 1st reflection}} \quad    \longmapsto\quad -2\mathbf{f}.
\ee
Now, the backprojection of both singularities $2u_2|_{\R_+\times\bo}$ and $2u_1|_{\R_+\times\bo}$ is a sum of the ones above, and we get $0$, modulo lower order terms.

We can continue this construction to get the following. Backprojecting even number of Dirichlet data of a singularity at consecutive reflections returns $0$ (therefore, an error operator $K=\Id$); and backprojecting an odd number returns $2\mathbf{f}$ (therefore, an error operator $K=-\Id$). This is consistent with the analysis above. 

In the proof below, we would need to backproject Dirichlet data multiplied by a smooth function $\phi(t)$. Given boundary data $h$ microlocally supported near the first reflection point of $\gamma_{x_0,\xi^0}$,  $F^{-1}$ is the back-projection \r{chain1}. 
Then by Egorov's theorem, $F^{-1}\phi h \cong (\phi \circ \tau_+)(x,D/|D|)F^{-1} h$. Then \r{chain1a} takes the form 
 \be{chain1b} 
\phi(t)u_N|_{\R_+\times\bo, \text{near 1st reflection}}\quad  \longmapsto\quad 2 (\phi \circ \tau_+)(x,D/|D|) \mathbf{f}.
\ee
To generalize \r{chain2} in this setting, note that the sequence of maps there is to apply $G^{-1}$, then $-\Id$ at the time of the first reflection, then $F^{-1}$. All those are FIOs associated to canonical diffeomorphisms, so we get 
\be{chain2a}
\begin{split}
\phi (t)u_N|_{\R_+\times\bo, \text{near 2nd reflection}}  \quad &\longmapsto\quad  0|_{\R_+\times\bo, \text{near 1st reflection}}\\\quad    &\longmapsto\quad -2(\phi \circ \tau_2)(x,D/|D|)F^{-1}\mathbf{f},
\end{split}
\ee
where $\tau_2(x,\xi)$ is the time of the second reflection of $\gamma_{x,\xi}(t)$. 
 
We therefore proved the following. 
 \begin{lemma}\label{lemma_main}
Let $v$ solve \r{T4} with $h$  given by
\be{h1}
h(t) = \phi(t)\Lambda(t)f,
\ee
(compare with  \r{h2}), with some $\phi\in C_0^\infty([0,T])$.  Then the map
\[
f\mapsto v(0)
\]
is a classical \PDO\ in $\Omega$ of order $0$ with principal term $p(x,\xi/|\xi|)$, with 
\[
p = \dots +\phi\circ \tau_{-3}-\phi\circ\tau_{-2}+  
(\phi\circ\tau_{-1}+\phi\circ\tau_1) -\phi\circ\tau_2+\phi\circ\tau_3+ \dots,
\]
where $\dots<\tau_{-2}<\tau_{-1} <0<\tau_1<\tau_2<\dots$ 
are the reflection times (except for $0$) of the unit speed geodesic issued from $(x,\xi)$, and $\phi$ is extended as an even function to $t<0$. 
\end{lemma}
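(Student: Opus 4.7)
My plan is to trace each half of the wave reflection by reflection through the forward Neumann problem, then through the backward Dirichlet problem \r{T4}, and to read off the principal symbol from the accumulated sign changes and the Egorov conjugations. First I split $\mathbf{f}=(f,0)$ using the half-wave projectors $\mathbf{\Pi}_\pm$ from \r{Pi'}; each half has first component with leading symbol $f/2$ and propagates along the bicharacteristic through $(x,\pm\xi)$. Strict convexity of $\bo$ in $c^{-2}g$, non-trapping, and $\supp f\Subset\Omega_0\Subset\Omega$ guarantee that both halves meet $\bo$ only transversely, at the times $0<\tau_1<\tau_2<\dots$ for the $+$ half and $0<|\tau_{-1}|<|\tau_{-2}|<\dots$ for the $-$ half, with only finitely many reflections in $[0,T]$.

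Next I use section \ref{sec_GO} to describe $\Lambda f$. Near the $k$-th forward reflection the incident $+$ half has boundary trace $G^{k-1}F\mathbf{\Pi}_+\mathbf{f}$, and the Neumann reflection doubles the trace to leading order: $\Lambda f\cong 2G^{k-1}F\mathbf{\Pi}_+\mathbf{f}$ microlocally there; the analogous statement holds for the $-$ half at time $|\tau_{-k}|$. Multiplication by $\phi(t)$ leaves the wave front intact, and I then solve \r{T4} by superposition, one reflection at a time. To back-propagate the singularity at reflection $k$ to $t=0$, one iterates the mechanism of \r{chain1b}--\r{chain2a}: the back-propagation is the composite $F^{-1}G^{-(k-1)}$, and passage through the $k-1$ intermediate Dirichlet reflections (with zero boundary data there) flips the sign once each, by the DN parametrix identity $N_\text{in}^{-1}N_\text{out}\cong-\Id$ derived from \r{symb}. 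The net sign is $(-1)^{k-1}$, and Egorov's theorem applied to the canonical diffeomorphisms \r{CR} and \r{CR1} conjugates the cutoff $\phi(t)$ into the scalar multiplier $(\phi\circ\tau_k)(x,\xi/|\xi|)$ at $t=0$.

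Combining these, the contribution from the $k$-th forward reflection to $v(0)$ is $(-1)^{k-1}\cdot 2\cdot(\phi\circ\tau_k)\cdot\pi_1\mathbf{\Pi}_+\mathbf{f}\cong(-1)^{k-1}(\phi\circ\tau_k)f$ modulo lower order, since the factor $2$ from Neumann reflection cancels the $1/2$ from $\pi_1\mathbf{\Pi}_+$; the $-$ half produces $(-1)^{k-1}(\phi\circ\tau_{-k})f$ with $\phi$ extended evenly so that $\phi(\tau_{-k})=\phi(|\tau_{-k}|)$. Summing (a finite sum for each $(x,\xi)$ by non-trapping) and adding the two halves recovers the symbol $p$ of the lemma. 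That $f\mapsto v(0)$ is a classical order-zero \PDO\ follows because each summand is a composition of elliptic FIOs with canonical diffeomorphisms and their parametrix inverses, multiplied by an Egorov scalar; its Schwartz kernel stays in $\Omega\times\Omega$ by the support hypothesis; and $\phi\in C_0^\infty([0,T])$ combined with the smoothness of the $\tau_k(x,\xi)$ in the transversal regime makes $p$ smooth on $T^*\Omega\setminus 0$.

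The chief technical obstacle is the multi-reflection bookkeeping: I must verify inductively that $G^{k-1}F$ is an elliptic FIO associated to the expected canonical diffeomorphism (built from \r{CR} and \r{CR1}), that its parametrix inverse composed with multiplication by $\phi$ yields exactly the Egorov scalar $\phi\circ\tau_k$, and that $(-1)^{k-1}$ is indeed the only leading-order effect of the $k-1$ intermediate Dirichlet reflections. Once this is in place, patching the local microlocal pieces into a single classical symbol on $T^*\Omega\setminus 0$ is routine, since $\phi$ vanishes near $0$ and $T$ kills any would-be discontinuity at the boundary of the region where a given $\tau_k$ enters $[0,T]$.
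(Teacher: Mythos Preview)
Your proposal is correct and follows essentially the same route as the paper's own argument: both trace the forward Neumann solution as the chain \r{chain1}, identify the boundary trace near the $k$-th reflection as $\cong 2G^{k-1}F\mathbf{\Pi}_\pm\mathbf{f}$, back-propagate through \r{T4} picking up a factor $(-1)^{k-1}$ from the intermediate Dirichlet reflections (via the DN relation $N_\text{in}^{-1}N_\text{out}\cong -\Id$ of \r{symb}), and use Egorov along the canonical diffeomorphisms \r{CR}, \r{CR1} to turn $\phi(t)$ into $(\phi\circ\tau_k)(x,\xi/|\xi|)$, exactly as in \r{chain1b}--\r{chain2a}. Your explicit accounting for the cancellation of the Neumann factor $2$ against the $1/2$ in $\pi_1\mathbf{\Pi}_\pm$ is a helpful clarification that the paper leaves implicit.
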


Let $0\le \phi$ be decreasing, as in our main result, with $\phi(0)=1$. Then $p\ge0$. On the other hand it is straightforward to see that $p\le2$; and $0<p<2$ if $\phi$ is strictly increasing and $\phi\circ\tau_k>0$ for at least one $k$. Therefore, the error $1-p$ is in $(0,1)$ for every fixed $(x,\xi)$. 
 
In what follows, we restrict $(x,\xi)$ to the unit cosphere bunlde $S^*\Omega_0$.  
 
We think of $l_k:= \phi\circ\tau_{k+1}- \phi\circ\tau_k$, $k\ge1$ as the weighed time between the $k$-th and the $(k+1)$-th reflection with weight $-\phi'\ge0$. Since $\phi(0)=1$, the weighted time between $\tau_{-1}$ and $\tau_1$ should be $(1-\phi\circ\tau_1)  +(1- \phi\circ\tau_{-1})$. This motivates the following definition:
\be{tau'} 
l_k=\begin{cases}
\phi\circ\tau_{k}- \phi\circ\tau_{k+1},& k=1,2,\dots,\\
2-\phi\circ\tau_{1}- \phi\circ\tau_{-1},  & k=0,\\
\phi\circ\tau_{-k}- \phi\circ\tau_{-k-1},  & k=-1,-2,\dots.
\end{cases}
\ee
Then 
 \[
 \begin{split}
\mu :&= \dots+l_{-2}-l_{-1}+l_0- l_1+l_2+l_3-\dots\\
& = \dots+( -\phi\circ\tau_{-3}+\phi\circ\tau_{-2}  )- ( -\phi\circ\tau_{-2}+\phi\circ\tau_{-1}  )\\
&\qquad\quad + ( 2-\phi\circ\tau_{-1}-\phi\circ\tau_{1}  )  -( \phi\circ\tau_{1}-\phi\circ\tau_{2}  )+\dots \\
&= \dots- 2\phi\circ\tau_{-3}+2\phi\circ\tau_{-2}- 2(\phi\circ\tau_{-1} +\phi\circ\tau_{-1}) +2\phi\circ\tau_{2}-  2\phi\circ\tau_{3}+\dots\\
  & = 2(1-p).
\end{split}
 \]
 On the other hand, 
 \[
  \dots+l_{-2}+l_{-1}+l_0+ l_1+l_2+l_3\dots=2.
 \]
Thus we get the following.
\begin{lemma}\label{lemma_p}
Under the assumptions on $\phi$, on $S^*\bar\Omega_0$, 
\be{kappa}
p =1-\kappa, \qquad \text{where}\quad 
\kappa = \sum_{k=-\infty}^\infty (-1)^kl_k\Big/ \sum_{k=-\infty}^\infty l_k.
\ee
If $\phi\circ\tau_k(x,\xi)\not=0$ for at least one $k$ (i.e., for $k=1$ or $k=-1$), then $|\kappa(x,\xi)|<1$. 
\end{lemma}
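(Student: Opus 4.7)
The plan is to reduce the assertion to two telescoping identities that are essentially demonstrated in the paragraph preceding the statement. Substituting the definition \r{tau'} into $\mu := \sum_k (-1)^k l_k$ and regrouping, the $\phi\circ\tau_{\pm k}$ terms combine with the sign pattern $\dots,+,-,+,-,\dots$ matching that in the expansion of $p$, and after telescoping one obtains $\mu = 2(1-p)$. Running the same substitution without alternating signs, the $\phi\circ\tau_{\pm k}$ terms cancel between neighbouring $l_k$'s and only the constant from $l_0 = 2 - \phi\circ\tau_1 - \phi\circ\tau_{-1}$ survives, giving $\sum_k l_k = 2$. Dividing yields $\kappa = \mu/\sum_k l_k = 1-p$, which is the first claim.

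For the strict contraction $|\kappa| < 1$, the first step is the easy bound $|\kappa| \le 1$. Monotonicity of $\phi$ together with $\phi(0)=1$ forces $l_k \ge 0$ for every $k$: for $k \neq 0$ this is decreasingness of $\phi$ on the interval between consecutive reflection times (after passing to $|\tau|$ via the even extension), and for $k=0$ it follows from $\phi \le 1$. The triangle inequality then gives $|\kappa| \le 1$, with equality possible only if all $l_k$ of one parity vanish.

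To rule this out, observe that the hypothesis $\phi\circ\tau_{k_0}(x,\xi) \ne 0$ for some $k_0$ implies, by the evenness of $\phi$ and its monotonicity with $\phi(0)=1$, that $\phi\circ\tau_{\pm 1}(x,\xi) > 0$ (since $|\tau_{\pm 1}| \le |\tau_{k_0}|$ on the appropriate side, and $\phi$ is decreasing in the $|\cdot|$-variable). Strict monotonicity of $\phi$ then gives $\phi\circ\tau_{\pm 1} < 1$, hence
\[
l_0 \;=\; 2 - \phi\circ\tau_1 - \phi\circ\tau_{-1} \;>\; 0,
\]
so the even-index sum is positive. On the odd side, $l_1 = \phi\circ\tau_1 - \phi\circ\tau_2 > 0$ (or $l_{-1} > 0$ by the analogous argument) by strict monotonicity, so the odd-index sum is positive as well. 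Both sums being strictly positive forces the triangle inequality to be strict, so $|\kappa(x,\xi)| < 1$.

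The only subtle point I anticipate is bookkeeping: keeping the sign conventions of $\tau_{-k}$ consistent once $\phi$ has been extended as an even function, and verifying that the anomalous definition of $l_0$ is precisely what produces the $+2$ in both telescoping identities. No analytic difficulty arises.
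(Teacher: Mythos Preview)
Your proposal is correct and follows essentially the same route as the paper. The identity $p=1-\kappa$ is obtained in the paper exactly as you describe: one computes $\mu=\sum_k(-1)^kl_k=2(1-p)$ and $\sum_kl_k=2$ by telescoping the definitions in \r{tau'}, then divides. For the strict bound $|\kappa|<1$, the paper argues (very briefly, in the paragraph preceding the lemma) that $0<p<2$ when $\phi$ is strictly decreasing and $\phi\circ\tau_k>0$ for some $k$; your triangle-inequality argument on the nonnegative $l_k$, showing that both an even-indexed term ($l_0$) and an odd-indexed term ($l_1$ or $l_{-1}$) are strictly positive, is just the same observation recast in the $l_k$ language and spelled out in more detail.
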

 
 To understand better this lemma, consider a few special cases.
 \begin{remark}\label{rem_T1}
 Let $\phi$ be the characteristic function of $[-T,T]$, which is not smooth, of course but we can always cut it off smoothly near the endpoints which does not change our conclusions below if neither $\tau_k$ can be $\pm T$. This corresponds to non-averaged time reversal. Then $\phi\circ\tau_k=1$ for every $\tau_k$ in the interval $[-T,T]$. Thus $l_k$ are either zero if the whole interval is in $[-T,T]$ or $1$ otherwise. 
Then $p$ take values $0$, $1$ of $2$ depending on the number of reflections in that interval; with the exception of the cases when a reflection happens too close to $\pm T$. The error is then either $0$, or $1$ or $-1$. 
 \end{remark}
 
  \begin{remark}\label{rem_T2}
Consider the special case $\phi(t)= (T-t)/T$ for $0\le t\le T$ and $\phi=0$ otherwise. This is the function we use in our numerical experiments. 
This is not a smooth function either, but we can deal with this as above.  Then $l_k = |\tau_{k+1}-\tau_k|/T$ for $k\not=0$ if if the whole interval is in $[-T,T]$, and $l_0 = (\tau_1-\tau_{-1})/T$. If $\tau_k<T<\tau_{k+1}$, then $l_k= T-\tau_k$, similarly for $k<0$. In other words, $l_k$ are just the lengths of the geodesic segments up to time $|t|\le T$ with the first and the last ones having endpoints not on $\bo$ generically. Then Lemma~\ref{lemma_p} holds with those values, away from the rays for which the broken geodesics hits $\bo$ for $t=\pm T$. The right-hand side of Figure~\ref{fig:thermo} illustrates that if we assume that the  plane $t=0$  intersects the longest ray there. 
\end{remark}

\begin{remark} \label{rem3}
One intuitive way to explain the lemma is to look  at the representation \r{I10} of the error term $\mathcal{K}_0$. The integrand admits the following interpretation. Each singularity propagates without sign change at the time of reflection (represented by the group $\mathbf{U}_N(\tau)$. Then ignoring for a moment the projection $\mathbf{\Pi}$ (which is identity up to a smoothing operator in the interior of $\Omega$) the same singularity travels back but satisfies Dirichlet boundary conditions, thus the sign of the amplitude changes at each reflection. The result at time $t=0$ is $(-1)^k \Id$ modulo lower order terms, where $k=k(\tau)$ is the number of reflections over the interval $t\in[0,\tau]$. The integral averages those values with weight $\chi$, which explains \r{kappa}.  The difficulty in following this approach is that we have to isolate the times of reflection with small intervals (then the singularity ends at $\bo$ for that time); and this those times depends on $(x,\xi)$. 
\end{remark}
\begin{figure}[t]   
  \centering
  \includegraphics[scale=0.25, keepaspectratio]{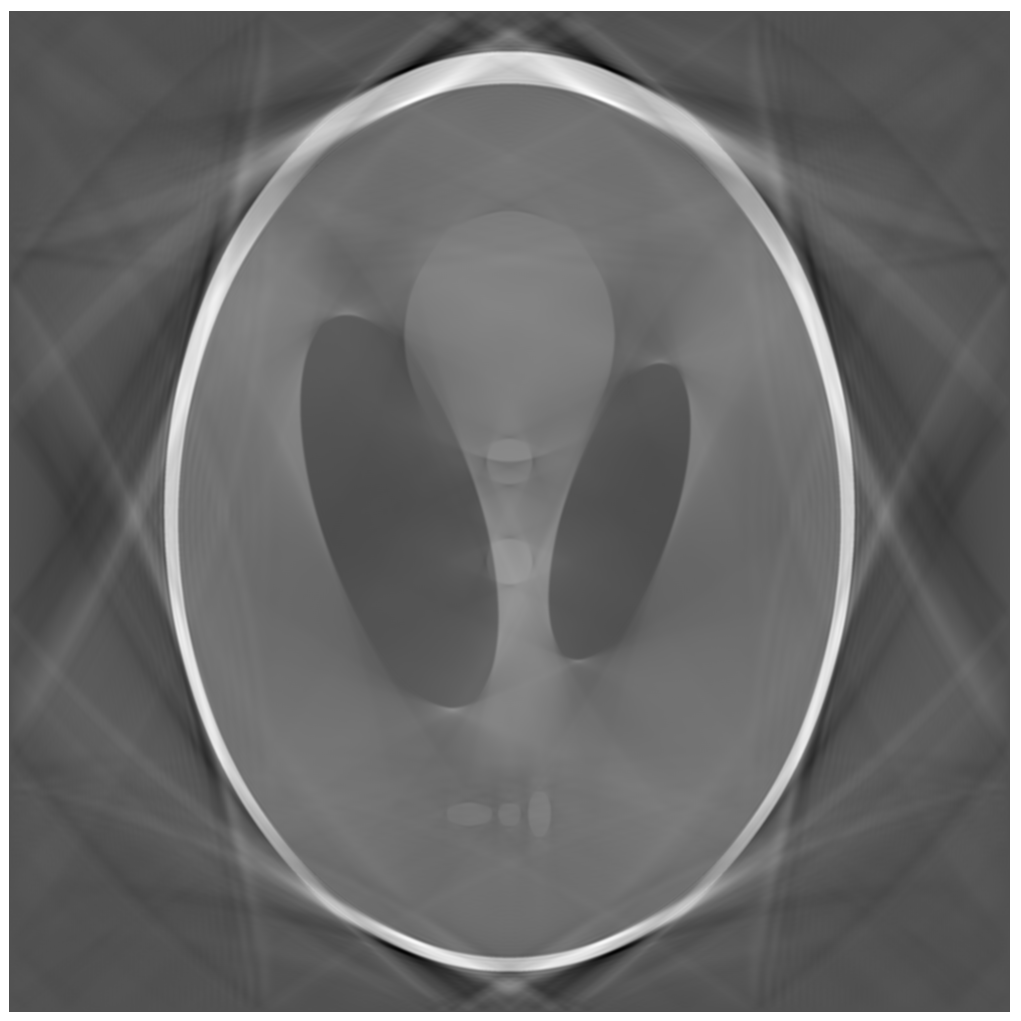}  \hspace{30 pt}
   \includegraphics[scale=0.25, keepaspectratio] {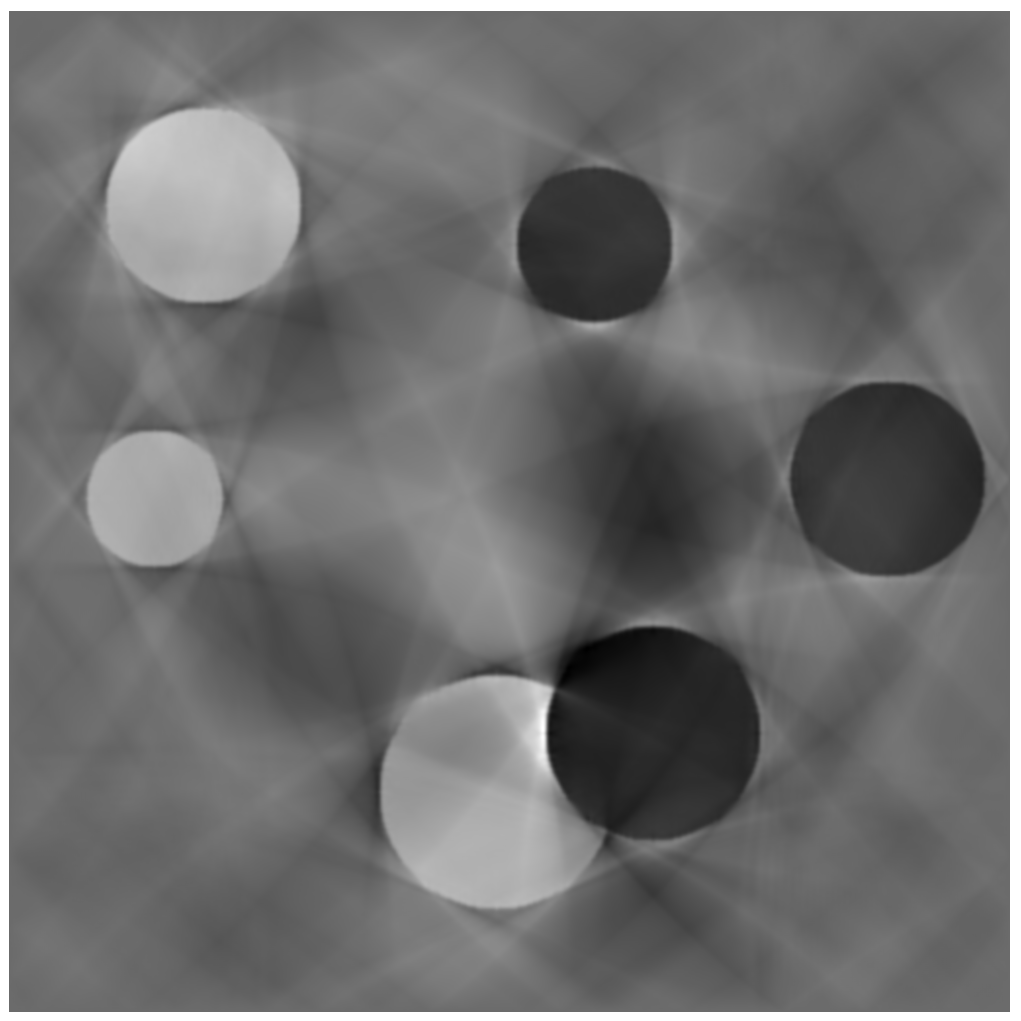}
  \caption{Sharp time reversal (with harmonic extension) and $c=1$, $T =0.9d$, where $d$ is the diagonal. The originals are the Shepp-Logan phantom and white and black disks on a gray background. 
  The purpose of this example is to illustrate the failure of the standard time reversal (with or without the harmonic extension) to resolve all singularities, see Remark~\ref{rem_T1}. Some singularities are lost (amplitude $0$), some are recovered with the right amplitude $1$, and some with amplitude $2$. The numerical range of the reconstructed images is roughly speaking twice that of the originals. 
 The harmonic extension creates a weak singularity not visible on the plots.} \label{fig1}
\end{figure}

 The following lemma is  what remained to complete the proof of Theorem~\ref{thm2.1}.

\begin{lemma}\label{lemma_R} Under the conditions of Theorem~\ref{thm2.1},   the operator $\mathcal{K}_0^* \mathcal{K}_0$ in ${H}_D(\Omega_0)$  
has an upper bound of its essential spectrum less than one. More precisely, that bound is the maximum $\kappa_0$ of $|\kappa|$ on $S^*\bar\Omega_0$. 
\end{lemma}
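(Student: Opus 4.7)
The plan is to identify $\mathcal{K}_0$, modulo compact operators, as a classical zeroth-order $\Psi$DO with principal symbol $\kappa(x,\xi/|\xi|)$, and then to invoke the standard spectral theory for self-adjoint $\Psi$DOs.

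I would begin by writing $\mathcal{A}_0\Lambda f = \Pi_0 v(0)$, with $v$ solving \r{T4} under the boundary data \r{h2}. Split \r{h2} into its two pieces. The first, $\phi(t)\Lambda f(t)$ with $\phi(t) = \int_t^T \chi(\sigma)\,\d\sigma$, falls directly under Lemma~\ref{lemma_main} and yields the symbol $p$ of Lemma~\ref{lemma_p}. The second piece $\int_t^T\chi(\tau)\Lambda f(\tau)\,\d\tau$ is one derivative smoother in $t$ than $\Lambda f$, so the corresponding contribution to $v(0)$ is one order smoother, hence compact on $H_D(\Omega_0)$. Using the orthogonal projection property of $\Pi_0$ together with $f\in H_D(\Omega_0)$, I then obtain, modulo compact operators, $\mathcal{K}_0 = \Id - \mathcal{A}_0\Lambda \cong \kappa(x,D/|D|)$, with $\kappa = 1 - p$ as in Lemma~\ref{lemma_p}.

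Next, since the $H_D$ and $L^2$ adjoints differ only by conjugation with $P_D^{\pm 1/2}$, which are elliptic with scalar real principal symbol, $\mathcal{K}_0^*\mathcal{K}_0$ is a self-adjoint classical $\Psi$DO of order zero with principal symbol $\kappa(x,\xi/|\xi|)^2$. The standard spectral theorem for self-adjoint zero-order $\Psi$DOs with real homogeneous principal symbol (G\aa rding inequality) then places the essential spectrum of $\mathcal{K}_0^*\mathcal{K}_0$ in the closed range of $\kappa^2$ on $S^*\bar\Omega_0$, i.e., in $[0,\kappa_0^2]$.

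It only remains to show $\kappa_0<1$. Fix $(x,\xi)\in S^*\bar\Omega_0$. Its full broken geodesic has length at most $T(\Omega)$, so at least one of $|\tau_1(x,\xi)|$, $|\tau_{-1}(x,\xi)|$ is bounded by $T(\Omega)/2 < T$; since $\phi$ is continuous, strictly positive on $[0,T)$ and $\phi(T)=0$, at least one of $\phi\circ\tau_{\pm 1}$ is strictly positive, and Lemma~\ref{lemma_p} gives $|\kappa(x,\xi)|<1$ pointwise. Strict convexity of $\bo$ together with $\bar\Omega_0\Subset\Omega$ forces every reflection of a geodesic starting in $\bar\Omega_0$ to be transverse, so the reflection times $\tau_k$ depend continuously on $(x,\xi)$; combined with continuity of $\phi$ and $\phi(T)=0$, the finite sum defining $\kappa$ is continuous on the compact set $S^*\bar\Omega_0$. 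Hence $\kappa_0 = \max|\kappa|<1$, and $\kappa_0^2 \le \kappa_0 < 1$ delivers the claim. The main obstacle is the first step: rigorously passing from Lemma~\ref{lemma_main}, stated for the model boundary datum $\phi(t)\Lambda f(t)$, to the full $\mathcal{A}_0\Lambda$. One must verify that the second term in \r{h2} contributes only a compact operator and that the projection $\Pi_0$, which is not a standard $\Psi$DO because of its Dirichlet behavior on $\bo_0$, does not spoil the principal-symbol computation. Both points work out because $\bar\Omega_0\Subset\Omega$ and $\supp f\subset\bar\Omega_0$: the microlocal calculus of Section~\ref{sec_GO} applies uniformly away from $\bo$, and $\Pi_0$ acts as the identity on singularities confined to $\bar\Omega_0$.
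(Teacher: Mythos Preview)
Your overall strategy---split $h$ via \r{h2}, show the integrated term is one order smoother, apply Lemma~\ref{lemma_main} to the $\phi(t)\Lambda f(t)$ piece, then use a G\aa rding-type argument---matches the paper's proof. The paper also adds a Weyl-sequence contradiction at the end rather than invoking a packaged spectral theorem, but that is cosmetic.

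There is, however, one point where your argument is looser than the paper's and where your stated resolution does not quite work. You write $\mathcal{K}_0\cong\kappa(x,D/|D|)$ modulo compact operators and then assert that $\mathcal{K}_0^*\mathcal{K}_0$ is itself a zeroth-order $\Psi$DO with principal symbol $\kappa^2$. But $\mathcal{K}_0=\Pi_0 Q$, and $\Pi_0$ is \emph{not} microlocally the identity near $\partial\Omega_0$: for $g=Qf$, one has $\Pi_0 g=g-\psi$ in $\Omega_0$ with $\psi$ harmonic and $\psi|_{\partial\Omega_0}=g|_{\partial\Omega_0}$, so $\psi$ is smooth only in the \emph{interior} of $\Omega_0$. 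Since $f\in H_D(\Omega_0)$ may carry singularities all the way up to $\partial\Omega_0$, so may $Qf$, and then $\Pi_0 Q$ is not a $\Psi$DO modulo compact near $\partial\Omega_0$. Your remark that ``$\Pi_0$ acts as the identity on singularities confined to $\bar\Omega_0$'' is true in the open set $\Omega_0$ but not at its boundary, which is exactly where the issue lives.

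The paper sidesteps this entirely: instead of claiming $\mathcal{K}_0$ is a $\Psi$DO, it uses only that $\Pi_0$ is an orthogonal projection, giving $\|\mathcal{K}_0 f\|_{H_D(\Omega_0)}^2\le (PQf,Qf)_{L^2(\Omega)}$. From there one works with the genuine $\Psi$DO $Q^*PQ$, rewrites $(Q^*PQf,f)_{L^2}=((Q^*Q+K_1)P^{1/2}f,P^{1/2}f)_{L^2}$, applies the sharp G\aa rding/square-root construction to $Q^*Q$, and finishes with a Weyl-sequence argument. This is the cleaner route; your argument would need either this inequality or an additional assumption (e.g.\ $\supp f$ in a strictly smaller $\Omega_0'\Subset\Omega_0$) to be complete. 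Your continuity argument for $\kappa_0<1$ is a nice addition that the paper leaves implicit.
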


\begin{proof} 
Our starting point is the representation \r{h2} for the boundary values of the solution $v$ of \r{T4}. Write $h$ in the form
\be{hhh}
h(t) = \phi(t)\Lambda f(t)- \int_t^T\chi(\tau) \Lambda f(\tau)\,\d\tau. 
\ee
The second term on the right is composition of a multiplication by the smooth function $\chi$ and a particular choice of an anti-derivative w.r.t.\ $t$. The operator $\d/\d t$ is not elliptic but it is elliptic in the hyperbolic region. Then so is any of its left inverses; with a principal symbol $-\i/\tau$ restricted to the hyperbolic region away from the origin $(\tau,\xi')=(0,0)$. Then by Egorov's theorem, backprojecting that term contributes a lower order \PDO\ at $t=0$. The principal symbol contribution comes from the first term on the right then. For that, we can apply Lemma~\ref{lemma_main} to get that $f\mapsto v(0)$ in \r{AA} is a \PDO\ with the properties described in Lemma~\ref{lemma_main}. 

We have  $\A_0 = \Pi_0v(0)$ and $\mathcal{K}_0  =\Pi_0Q$, where $Q:= \kappa (x,D)$ is a properly supported \PDO\ in $\Omega$.

\begin{figure}[t] 
  \centering
  \includegraphics[scale=0.25, keepaspectratio]{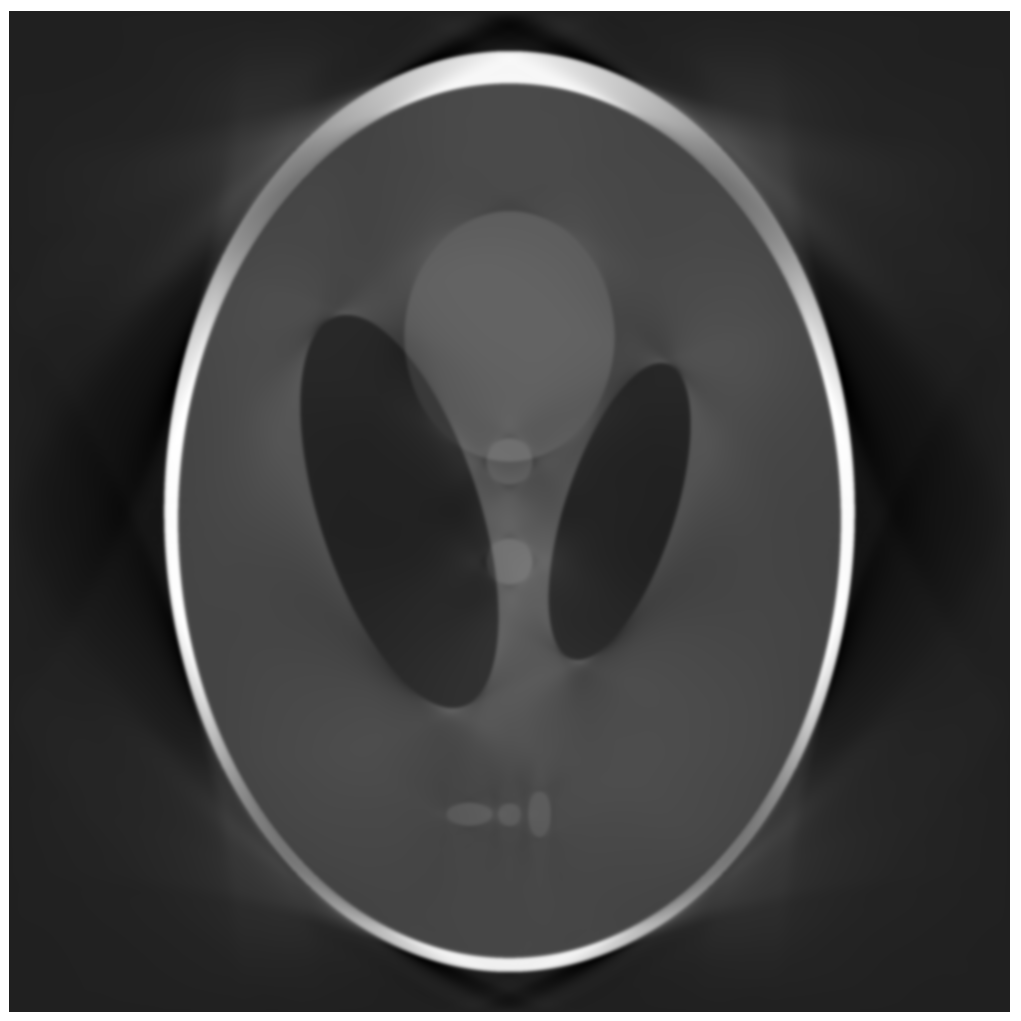}  \hspace{30 pt}
   \includegraphics[scale=0.25, keepaspectratio]{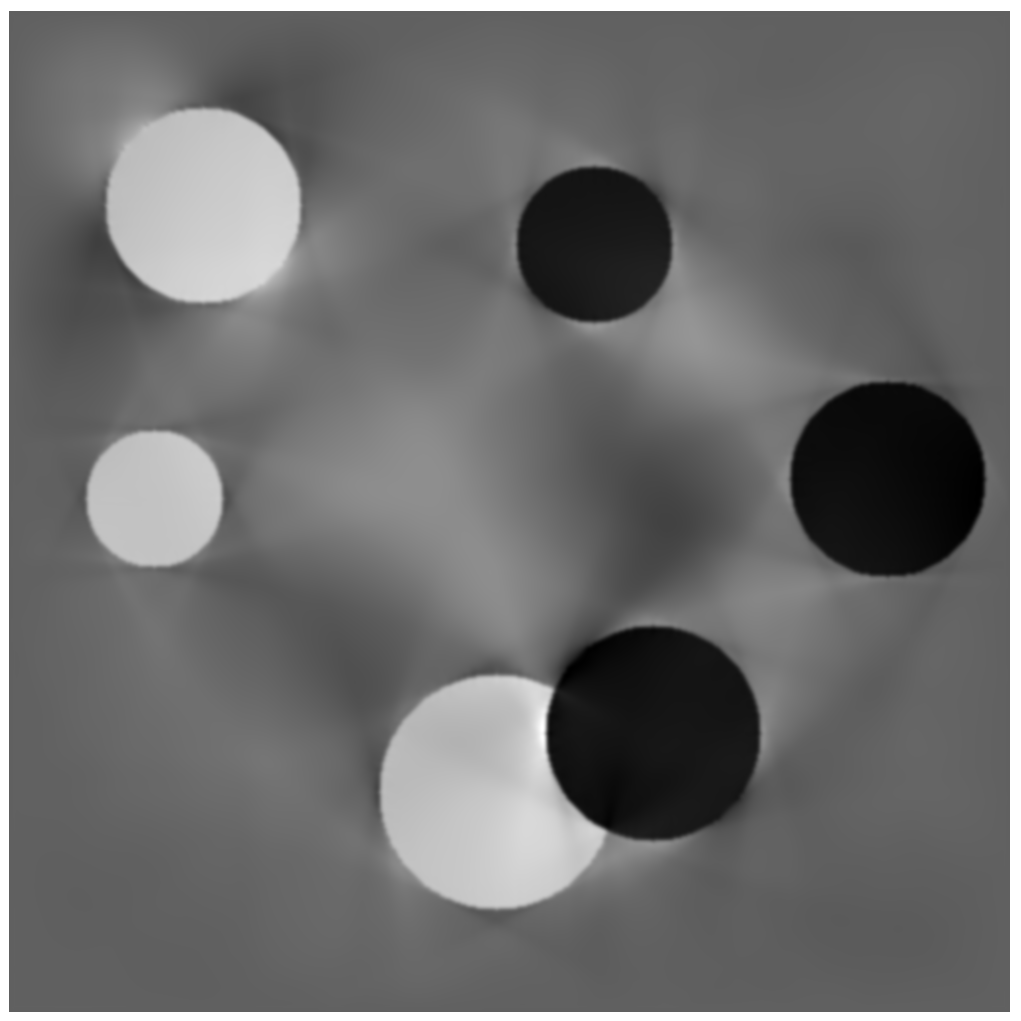}
  \caption{Averaged time reversal with a linear decreasing function form $0$ to $T =0.9d$, see \r{hhh} and Remark~\ref{rem_T2}, where $d$ is the diagonal, $c=1$. The originals are as above. 
 In this example we  illustrate the power of the averaging (without the harmonic extension  which affects the lower frequencies only). 
All singularities are reconstructed now but with different (positive) amplitudes except for those who hit a corner because the boundary is not smooth.   The lowest amplitudes are at rays close to those hitting a corner at $\pi/4$ angles as in the white disk in the upper left corner.  }   \label{fig2}
\end{figure}

In $L^2$, $Q^*Q$ is a \PDO\ with principal symbol $\kappa^2$, where $Q^*$ is the $L^2$ adjoint. To compute the same in $H_D$,  
for ${f}\in   C_0^\infty(\Omega_0)$ write
\[
  \|\mathcal{K}_0{f}\|^2_{H_D(\Omega_0)}  
  \le (P Qf,Qf)_{L^2(\Omega)}  = (Q^*PQf,f)_{L^2(\Omega)}.
\]
The principal symbol of $Q^*PQ$ is $\kappa^2\sigma_p(P)$; therefore, there is a \PDO\ $K_1$ of order $0$ so that
\[
(Q^*PQf,f)_{L^2(\Omega)} = ((Q^*Q + K_1)P^{1/2}f,P^{1/2}f)_{L^2}  .  
\] 
Since $\kappa^2\le\kappa_0^2<1$ on $S^*\bar\Omega_0$, for every $\eps>0$, 
  we have $(\kappa_0+\eps)^2-q_0^2>C>0$ in some neighborhood of $S^*\bar\Omega_0$ in $S^*\Omega$. Then $(\kappa_0+\eps)^2-\Re\text{Op}(q_0^2) = B^*B+K_2$ with $K_2$ compact, where $B$ is of order $0$,  where $\Re L = (L+L^*)/2$, see \cite[Lemma~II.6.2]{Taylor-book0}. Therefore, $Q^*Q = (\kappa_0+\eps)^2-B^*B+K_3$, with $K_3$ compact in $L^2$. In particular, $Q^*Q:L^2(\Omega_0)\to L^2(\Omega_0)  $ is a contraction up to a compact operator. 
Therefore,
\be{KK}
  \|\mathcal{K}_0{f}\|^2_{H_D(\Omega_0)} \le (\kappa_0+\eps)^2 \|{f}\|^2+\|K_4f\|\|f\| 
\ee
with $K_4$ a compact operator in $H_D(\Omega_0)$.

Assume now that the essential spectrum of $\mathcal{K}_0^* \mathcal{K}_0$ contains $\kappa_0+2\eps$. Then there is a orthonormal sequence $f_n$ so that $\mathcal{K}_0^* \mathcal{K}_0f_n = (\kappa_0+2\eps)f_n+o(1)$, see \cite[VII.12]{Reed-Simon1}. Since $K_4$ is compact, $K_4f_n\to0$. Taking the limit $n\to\infty$ in \r{KK}, we get a contradiction.
\end{proof}

\section{Numerical simulations for  data on the whole boundary} \label{sec7}
We used 1001x1001 grids and a second order finite difference scheme for Figure~\ref{fig1} and Figure~\ref{fig2}, and on a 501x501 grid for Figure~\ref{fig3}. The purpose of those tests was to illustrate the mathematics. Numerical tests under conditions that would more closely resemble the actual applications will be presented in a forthcoming work. 

The first phantom is the Shepp-Logan one, properly resampled from a higher resolution to prevent jagged edges. The second phantom are white and black disks on a uniform gray background. The iterations for Figures~\ref{fig3}, \ref{fig4} are done in the following way. 
\[
\begin{split}
f_1 &= \A_0 h, \quad h:= \Lambda f,\\
f_{n} &= (\Id-\A_0\Lambda) f_{n-1} +\A_0 h, \quad n=2,3,\dots. 
\end{split}
\]
At each step, we evaluate $\|f_{n}-f\|_{L^2}$ but we do not use this to decide how many steps to take (since $f$ is unknown). We use  $10$ terms in \r{2.2} for Figures~\ref{fig3}, \ref{fig4}.

\begin{figure}[h]   
  \centering
  \includegraphics[scale=0.5, keepaspectratio]{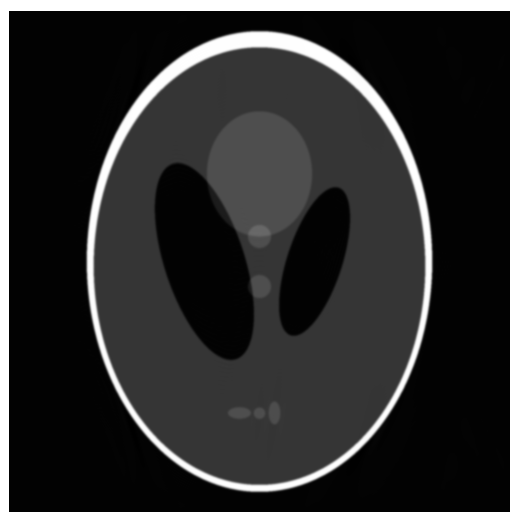}  \hspace{26 pt}
   \includegraphics[scale=0.5, keepaspectratio]{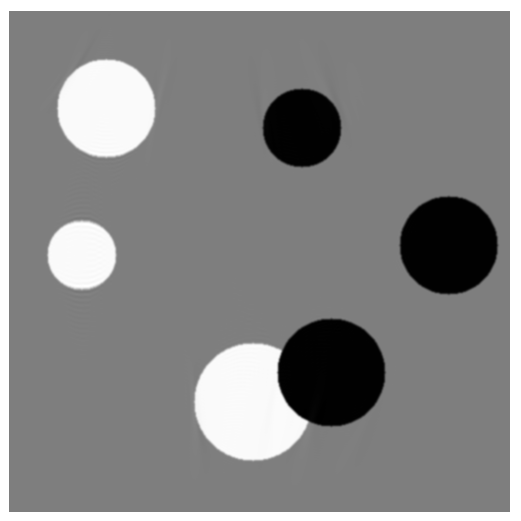} 
  \caption{Full data Neumann series inversion with $10$ terms,  $T =5$, on the square $[-1,1]^2$, variable $c= 1+0.3\sin(\pi x^1)+0.2\cos(\pi x^2)$.  The originals are as above. 
We average with $\phi$ as in Remark~\ref{rem_T2}, i.e., with weight $\chi =1$ in $[0,T]$, see \r{A}. 
The   artifacts, best visible when plotting the difference, are mainly due to the presence of corners.   The $L^2$ error on the left is $0.44\%$; and on the right: $0.34\%$. The $L^\infty$ error on the left is about $1.2\%$; and about $3\%$  on the right. }\label{fig3}
\end{figure}

The boundary here is not smooth, and singularities hitting $\bo$ too close to a corner have very short paths before the next reflection. This creates some mild instability as the spectral bound of the error there is too close to $1$. The rays hitting a corner close to 45 degrees are the most unstable. The faint artifacts in the second reconstruction in Figure~\ref{fig3} can be explained by that. 

We do not compute numerically the lower bound for the  time $T$ needed for stability in Figure~\ref{fig3}, where $c$ is variable. When $c=1$, this lower bound is  half of the diagonal, i.e., $\sqrt 2$; and then the time $T=5$ exceeds it by a comfortable margin to be able to claim that $T=5$ is enough for stability even for that choice of $c$.  Numerical experiments with $T=3$ show a very good reconstruction, as well, even with partial data as in the next section. 

%
%

\section{Partial Data}
\subsection{Sharp averaged time reversal}

Assume that we are given $\Lambda f$ on $\Gamma\subset\Omega$. We do time reversal with partial data as follows. Solve
\begin{equation}   \label{P1}
\left\{
\begin{array}{rcll}
(\partial_t^2 +P)v &=&0 &  \mbox{in $(0,T)\times \Omega$},\\
  v|_{(0,T)\times\Gamma}&=&h,\\
   \partial_\nu  v|_{(0,T)\times(\bo\setminus\Gamma)}&=&0,\\
v|_{t=T} &=&\phi,\\ \quad \partial_t v|_{t=T}& =&0, 
\end{array}
\right.               
\end{equation} 
where, eventually, we will set $h=\Lambda f$, and we will choose $\phi$ below; and set 
\be{P3}
Ah : = v(0). 
\ee
The choice of the boundary conditions is dictated by the following: we know $u$ on $(0,T)\times \Gamma$, and we use this information. Next, we do not know $u$ on the rest of $\bo$ but we know that it satisfies homogeneous Neumann boundary conditions there. To choose $\phi$, we use the same arguments: we solve the following Zaremba problem
\be{P5}
\Delta_g \phi=0, \quad \phi|_{\Gamma}=h, \quad \partial_\nu  v|_{\bo\setminus\Gamma}=0.
\ee
This is a well posed problem if the boundary data is in $L^2$ at least, see, e.g., \cite{Shamir,Schulze-book}. The Laplacian with homogeneous mixed conditions has a natural self-adjoint realization, and by the Stone's theorem, \r{P1} is well posed and energy preserving, as well. 

More precisely, let 
\[
H_Z(\Omega)= \left\{f\in H^1(\Omega);\;  f|_{\Gamma}=0 \right\},
\]
equipped with the Dirichlet  norm \r{2.0H}, 
and set $\mathcal{H}_Z(\Omega) = H_Z(\Omega)\oplus L^2(\Omega)$. On $H_Z(\Omega)$, we define the self-adjoint operator $P_Z$ with domain
\[
\mathcal{D}(P_Z) = \left\{f\in H_Z(\Omega);\;  \partial_\nu f|_{\bo\setminus \Gamma}=0 \right\}. 
\]
Then we define $\mathbf{P}_Z$ as in \r{s1} with $P=P_Z$ with domain consisting of all $\mathbf{f}\in\mathcal{H}_Z$ so that $\mathbf{P}_Z\mathbf{f}$ (considered in distribution sense) belongs to $\mathcal{H}_Z$, see also \cite{Corn_Robb_11}. Let $\mathbf{U}_Z(t)=\exp(t\mathbf{P}_Z)$ be the corresponding unitary group.  

Define the ``error'' operator $K$ as before by  
\[
A\Lambda = \Id-K. 
\]

To analyze $K$, let $w=u-v$ be the ``error''. Then $w$ solves
\begin{equation}   \label{I4b}
\left\{
\begin{array}{rcll}
(\partial_t^2 +P)w &=&0 &  \mbox{in $(0,T)\times \Omega$},\\
  w|_{(0,T)\times\Gamma}&=&0,\\
    \partial_\nu w|_{(0,T)\times(\R^n\setminus \Gamma)}&=&0,\\
w|_{t=T} &=&u|_{t=T} -\phi\\ \quad \partial_t w|_{t=T}& =&0. 
\end{array}
\right.               
\end{equation} 
Then
\be{I5a}
Kf  = w(0). 
\ee
For $f\in C^\infty(\bar\Omega)$, let  $\Pi f := f-\phi$, where $\phi$ solves \r{P5} with $h=f|_{\Gamma}$. 
Since $\Pi f$ vanishes on $\Gamma$ and $\partial_\nu\phi$ vanishes on $\bo\setminus\Gamma$, after integration by parts, we get $\Pi f\perp \phi$ in $H_Z(\Omega)$. Therefore, we have the Pythagorean identity
\[
\|\Pi f\|^2+\|\phi\|^2 = \|f\|^2
\]
in the $H_Z$ norms. In particular, $\|\Pi \|\le 1$.

This construction  yields the following for the operator $K : H_N \to   H_D$, see also \r{I6}:
\be{I6'}
K = \pi_1 \mathbf{U}_Z(-T)\mathbf{\Pi} \mathbf{U}_N(T) \pi_1^* ,
\ee
where $\pi_1(f_1,f_2) := f_1$, $\pi_1^*f :=(f,0)$. Obviously,
\be{I7b}
\|K\|_{H_N\to H_Z}\le 1. 
\ee
We define the averaged time reversal map $\A$ as in \r{A}. The latter can be also described as follows. We solve
\begin{equation}   \label{I4''}
\left\{
\begin{array}{rcll}
(\partial_t^2 +P)v &=&0 &  \mbox{in $(0,T)\times \Omega$},\\
  v|_{(0,T)\times\Gamma}&=&h(t),\\
    \partial_\nu v|_{(0,T)\times(\R^n\setminus \Gamma)}&=&0,\\
v|_{t=T} &=&0 \\ \quad \partial_t v|_{t=T}& =&0. 
\end{array}
\right.               
\end{equation} 
Here $h$ is as in \r{TR}, see also \r{h2}. Then we set $\A=v(0)$ and  $\A_0= \Pi_0 \A$. 

The analysis of $K$ in this case however is more complicated. To prove the equivalent of Lemma~\ref{lemma_R} in this case, we need to study the propagation of singularities for the Zaremba problem: those who hit the boundary of $\Gamma$. 
The convergence of the Neumann series in this case is an open problem. A numerical reconstruction is shown in Figure~\ref{fig4}. The data used is on the left and the bottom sides on the squares, and on 20\% of the other two sides, as marked there. When $c=1$, as on the left, this is a stable configuration, ignoring the fact that $\bo$ is not smooth at corners, and the critical time for stability is the diagonal $T=2\sqrt2\approx 2.82$. A critical case would be to use two adjacent sides. 
 We choose $T=5$ in both cases. 
Reconstructions with $T=3$, not shown here, look still very good, with a slightly higher $L^2$ error: 4.25\% for the Shepp-Logan phantom (on the right), vs.\ 2\% for $T=5$ and $1.5\%$ for $T=7$. 

\begin{figure}[h]   
  \centering
  \includegraphics[scale=0.5, keepaspectratio]{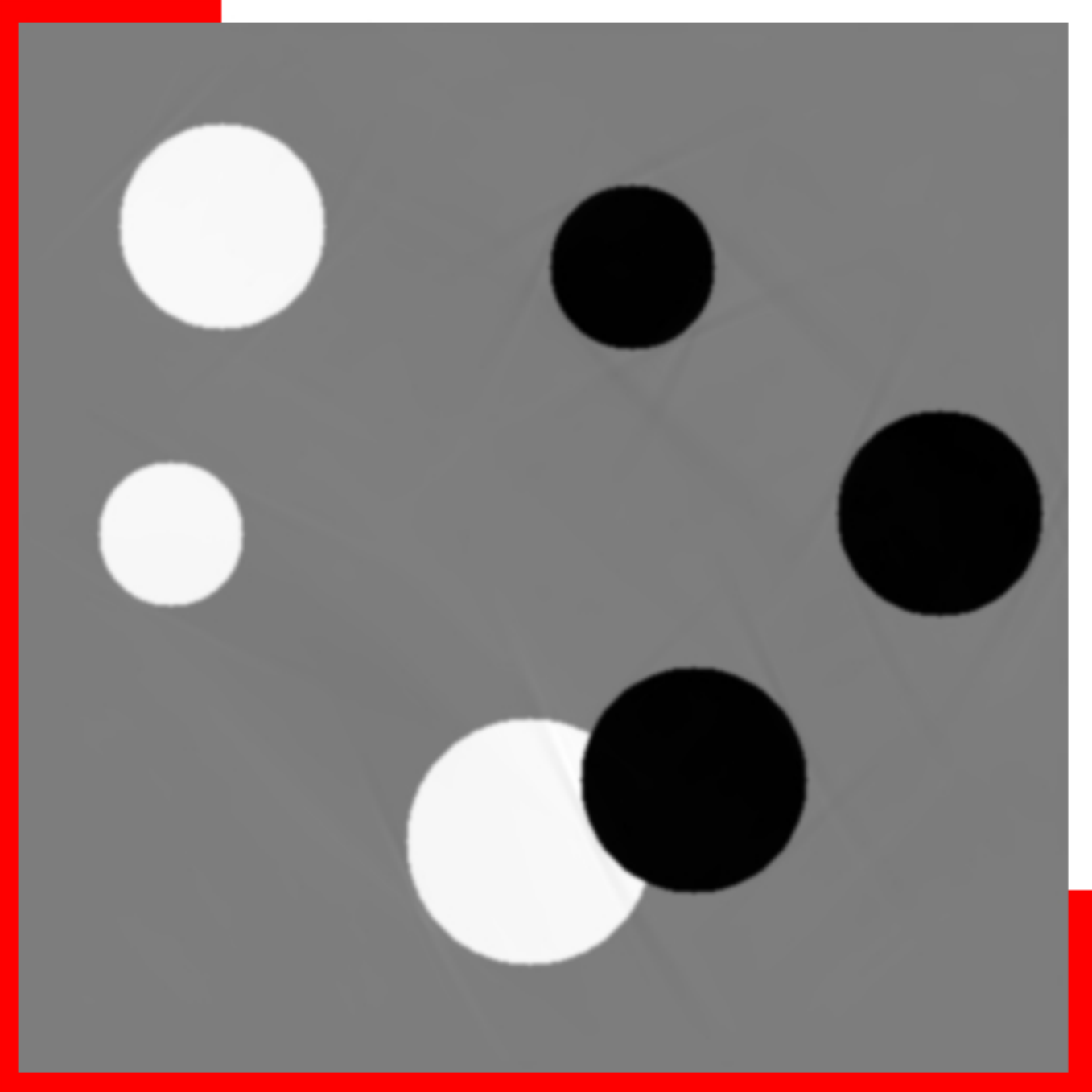}  \hspace{26 pt}
   \includegraphics[scale=0.5, keepaspectratio]{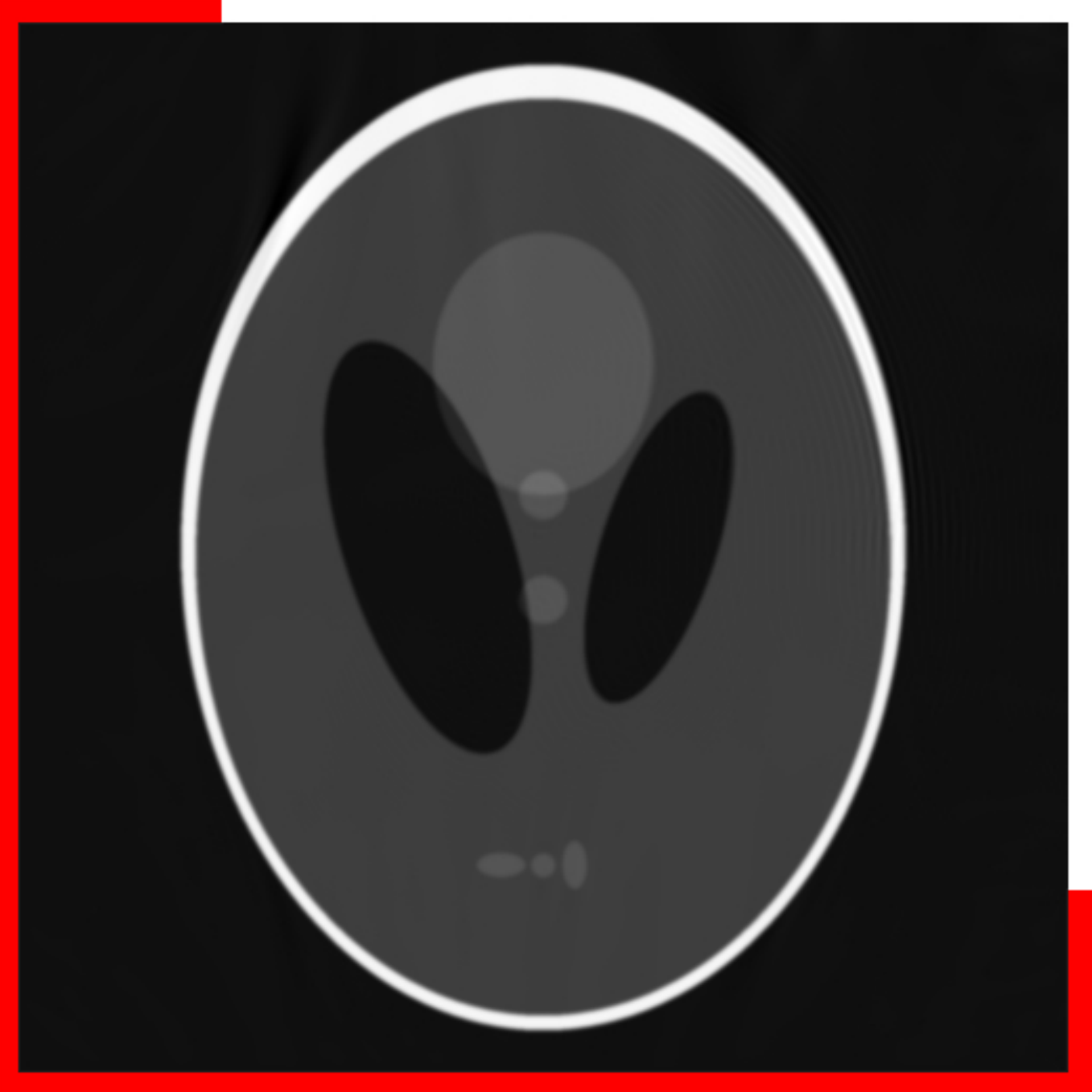} 
  \caption{Partial data inversion with data on the indicated part of $\bo$. 
  Neumann series inversion with $10$ terms,  $T =5$, on the square $[-1,1]^2$. 
  Left: constant speed $c=1$, $L^2$ error $=0.7\%$. Right:  variable $c= 1+0.3\sin(\pi x^1)+0.2\cos(\pi x^2)$, $L^2$ error $=2\%$. 
We average with $\phi$ as in Remark~\ref{rem_T2}, i.e., with weight $\chi =1$ in $[0,T]$, see \r{A}. 
Again, the  the most visible artifacts can be explained by the presence of corners.  
}
\label{fig4}
\end{figure}

\subsection{Recovery of singularities}

Instead of that, we will show that our method gives a parametrix recovering almost all singularities under the technical assumption that $f$ has no singularities hitting the edge of $\Gamma$.

For a fixed $T>0$ and $\Gamma$, let $\mathcal{V}\subset T^*\Omega\setminus 0$ be the open set of visible singularities, see Definition~\ref{def1}. Let $\mathcal{I}\subset T^*\Omega\setminus 0$ be the open set of invisible singularities. Recall that   $\Sigma_0:= T^*\bar\Omega_0\setminus (\mathcal{V}\cup\mathcal{I})$ is a conic set  of measure zero.

Then the proof of Lemma~\ref{lemma_R} implies the following.

\begin{proposition}\label{pr1} 
Let $\mathcal{U}\Subset  \mathcal{V}\cup\mathcal{I}$ be an open conic set. 
Let $f$ be supported in $\bar\Omega_0$ and let $\WF(f)\cap \Sigma_0=\emptyset$. 
Then there exist a \PDO\ $M$ of order $0$ with a homogeneous principal symbol taking values in $[1-\kappa_0,1+\kappa_0]$,    $\kappa_0\in [0,1)$ in $T^*\mathcal{U}$, and essential support in $\mathcal{V}$, so that
\[
 \A\Lambda f = Mf\quad \text{mod $C^\infty$}. 
\]

In particular, if the stability condition holds, $\mathcal{I}=\emptyset$, and $M$ is elliptic away from $\Sigma_0$. 
\end{proposition}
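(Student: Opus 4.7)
The proof adapts the geometric-optics machinery of Section~\ref{sec_GO} and the computations of Lemmas~\ref{lemma_main}--\ref{lemma_p} to the mixed boundary value problem \eqref{I4''}. The key use of the hypothesis $\WF(f)\cap\Sigma_0=\emptyset$ is that, for every $(x,\xi)$ in the wave front of $f$, the broken bicharacteristic through it meets $\bo$ only at points lying in the relative interior of $\Gamma$ or in the interior of $\bo\setminus\Gamma$, never at $\partial\Gamma$. Each reflection is then unambiguously of ``Dirichlet type'' (where \eqref{I4''} prescribes $v=h$) or ``Neumann type'' (where it prescribes $\partial_\nu v=0$); the outgoing and incoming DN maps of Section~\ref{sec_GO} remain \PDO s with their usual elliptic principal symbols on each piece separately, and we obtain a backward GO parametrix for \eqref{I4''} by chaining the two reflection rules exactly as in the full-data case.

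\textbf{The two regimes.} If $(x_0,\xi^0)\in\mathcal{I}\cap\mathcal{U}$, the broken geodesic through $(x_0,\xi^0/|\xi^0|)$ never reaches $\bar\Gamma$ in time $|t|\le T$; by forward propagation of singularities for the Neumann problem \eqref{1}, $\Lambda f$ is smooth microlocally along the (empty) trace of this ray on $[0,T]\times\Gamma$, both terms of \eqref{hhh} are smooth there, and backward propagation for \eqref{I4''} gives $v(0)$ smooth at $(x_0,\xi^0)$, so $M$ may be arranged to have essential support in $\mathcal{V}$. If $(x_0,\xi^0)\in\mathcal{V}\cap\mathcal{U}$, I would first repeat the Egorov argument from the proof of Lemma~\ref{lemma_R} to discard the integral term in \eqref{hhh} as lower order, reducing to $h=\phi(t)\Lambda f(t)$. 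The forward solution preserves the sign of the amplitude at every Neumann reflection, whereas the backward parametrix for \eqref{I4''} flips it at each Dirichlet-type reflection and preserves it at each Neumann-type one. Averaging over $\tau$ as in \eqref{h2} and repeating the telescoping computation of Lemma~\ref{lemma_p}, now with each $\tau_k$ carrying a sign $\varepsilon_k\in\{-1,+1\}$ determined by the parity of the number of $\Gamma$-reflections between $t=0$ and $\tau_k$, gives a classical PDO of order zero with principal symbol $p=1-\kappa$, where
\[
\kappa(x,\xi) \;=\; \frac{\sum_k \varepsilon_k\, l_k}{\sum_k l_k},\qquad |\kappa|\le 1.
\]
For visible $(x,\xi)$, at least one $\tau_k$ lies in $\Gamma$ with $\phi\circ\tau_k>0$, so the $\varepsilon_k$'s cannot all agree and $|\kappa|<1$ pointwise. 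A compactness argument on $\mathcal{U}\cap S^*\bar\Omega_0$ upgrades this to a uniform bound $|\kappa|\le\kappa_0<1$, yielding the stated range $[1-\kappa_0,1+\kappa_0]$ for the principal symbol of $M$; when $\mathcal{I}=\emptyset$ the bound holds on all of $T^*\bar\Omega_0\setminus\Sigma_0$ and $M$ is elliptic there.

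\textbf{Main obstacle.} The combinatorial bookkeeping is a direct extension of Lemma~\ref{lemma_p} and causes no real trouble; the genuine difficulty is the microlocal justification of the backward parametrix for the Zaremba-type system \eqref{I4''} at reflection points whose base projects close to $\partial\Gamma$, where the mixed condition fails to be elliptic and the DN calculus of Section~\ref{sec_GO} breaks down. The hypothesis $\WF(f)\cap\Sigma_0=\emptyset$ is imposed precisely to sidestep this, and the technical content of the proof amounts to checking that, microlocally away from $\Sigma_0$, solving \eqref{I4''} commutes modulo smoothing with the chaining of the Dirichlet and Neumann reflection operators, so that the parametrix construction of Section~\ref{sec_GO} carries through unchanged.
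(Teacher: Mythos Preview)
Your proposal is correct and follows essentially the same approach as the paper's proof. The only difference is a bookkeeping choice: rather than tracking signs $\varepsilon_k$ at every reflection, the paper simply relabels the $\tau_k$ to enumerate only the reflections occurring on $\Gamma$ (Neumann reflections on $\bo\setminus\bar\Gamma$ are sign-transparent in both the forward and backward problems and can therefore be ignored), after which Lemmas~\ref{lemma_main} and~\ref{lemma_p} apply verbatim with the redefined $\tau_k$.
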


\begin{proof}
We follow the proof of Lemma~\ref{lemma_R}. The unit speed geodesic issued from each $(x,\xi)\in\mathcal{V}$ hits $\bo$ at a point either on $\Gamma$  or on $\bo\setminus\bar \Gamma$, for $|t|\le T$. When backprojecting the Dirichlet data, the back-propagating geodesics hits $\bo$ at the same points. Let is rename the reflection times $\tau_k$ by calling $\tau_1$ the first time for which the geodesic hits $\Gamma$ (ignoring those where it hits $\bo\setminus\bar \Gamma$), etc. Then at the reflection times related to $\bo\setminus\bar \Gamma$, the principal part does not change sign because we imposed Neumann boundary conditions there. At the remaining ones, it does. Therefore, all the arguments hold and Lemma~\ref{lemma_main} and Lemma~\ref{lemma_p} still hold with the so redefined $\tau_k$. If $(x,\xi)$ is visible, then there is at least two terms in the sum in \r{kappa} which proves the proposition.
\end{proof}

The proposition implies that $\A$  recovers the visible part of $\WF(f)$  under the a priori assumption that $\WF(f)$ is disjoint from $\Sigma_0$. Also, $\kappa_0$ can be chosen as in Lemma~\ref{lemma_R} with $\tau_k$ as in the proof above. Next, writing $M=\Id-K$, the formal Neumann expansion $\Id+K+K^2+\dots$ applied to $\A\Lambda f$, considered in Borel senses, recovers $f$ microlocally in $\mathcal{U}$. The invisible singularities, those in $\mathcal{I}$, cannot be recovered.  In practical reconstructions, a finite expansion with $N$ terms recovers $f$ microlocally there approximately with an exponential error of the principal symbol. 

Finally, we notice that general microlocal arguments like those used in \cite{BardosLR_control}, imply that one can recover all visible singularities in a stable way.
Our goal here was to suggest a constructive way of doing so.  When the observations are done on the whole boundary, stability follows from  Theorem~\ref{thm_st} but in Theorem~\ref{thm2.1}, we show how to reconstruct $f$ in a stable way. 



\end{document}